\documentclass[11pt,
twoside]{amsart}
\usepackage{lmodern} 
\usepackage{soul, wrapfig}
\usepackage{graphicx}
\usepackage{color}
\usepackage{amsmath}
\usepackage{amssymb}
\usepackage{amsfonts}
\usepackage{latexsym, amsthm, mathrsfs}
\usepackage{hyperref, breakurl}
\usepackage[OT1]{fontenc}
\usepackage{fancybox}
\usepackage{amscd}
\usepackage{enumitem}
\usepackage{fontenc}
\usepackage{amsthm}
\usepackage{graphicx}
\usepackage[english]{babel}
\usepackage{tikz-qtree}
\usepackage{rotating}
\usepackage{stmaryrd}



\numberwithin{equation}{section}
\newtheorem{theorem}{Theorem}[section]
\newtheorem{lemma}[theorem]{Lemma}

\theoremstyle{plain} 

\newtheorem{proposition}[theorem]{Proposition}
\newtheorem{corollary}[theorem]{Corollary}

\newtheorem{claim}[theorem]{Claim}

\newtheorem*{maintheorem*}{Main Theorem}
\newtheorem*{conjecture*}{Conjecture}
\newtheorem*{theorem*}{Theorem}
\newtheorem*{proposition*}{Proposition}
\newtheorem*{corollary*}{Corollary}

\theoremstyle{definition} 
\newtheorem{definition}[theorem]{Definition}

\theoremstyle{remark}  
\newtheorem{remark}[theorem]{Remark}
\newtheorem{example}[theorem]{Example}
\newtheorem*{remarks*}{Remarks}
\newtheorem*{remark*}{Remark}
\newtheorem*{claim*}{Claim}

\renewcommand{\phi}{\varphi}



\newcommand{\cohen}{\poset{C}}
\newcommand{\bC}{\poset{C}}
\newcommand{\conc}{\smallfrown}

\newcommand{\club}{\mathrm{Club}}

\newcommand{\clubsilver}{\silver^\club}

\newcommand{\DDelta}{\mathbf{\Delta}}

\newcommand{\even}{\text{Even}}
\newcommand{\ideal}{\mathcal}

\newcommand{\ifif}{\Leftrightarrow}
\newcommand{\infinite}{\infty}

\newcommand{\Finite}{\mathsf{Fin}}

\newcommand{\force}{\Vdash}
\newcommand{\filter}{\mathbf}

\newcommand{\ns}{\textsf{NS}}
\newcommand{\nb}{\mathbf{NB}}
\newcommand{\nuv}{\mathbf{NUV}}
\newcommand{\nv}{\mathbf{NV}}

\newcommand{\odd}{\text{Odd}}

\newcommand{\poset}{\mathbb}

\newcommand{\restric}{{\upharpoonright}}

\newcommand{\silver}{\poset{V}}

\newcommand{\SSigma}{\mathbf{\Sigma}}

\newcommand{\term}{\text{term}}

\newcommand{\1}{\mathbf{1}}
\newcommand{\0}{\mathbf{0}}


\newcommand{\nc}{\newcommand}
\nc{\marginparr}[1]
{\marginpar{\makebox[4mm]{} {#1}}}

\nc{\bP}{\mathbb P}
\nc{\nothing}[1]{}
\nc{\comment}[1]{#1}


\nc{\nco}{\DeclareMathOperator}
\nco{\codes}{code}
\nco{\taille}{taille}
\nco{\ter}{ter}
\nco{\rk}{rk}
\nco{\llower}{lower}
\nco{\order}{o}
\nco{\Nm}{Nm}
\nco{\ppower}{pp}
\nco{\pcf}{pcf} 
\nco{\tcf}{tcf} 
\nco{\tlim}{tlim} 
\nco{\limtext}{lim} 
\nco{\prodt}{{\textstyle \prod}}
\nco{\symdiff}{\triangle}
\nco{\dom}{dom}
\nco{\card}{card}
\nco{\lh}{lh}
\nco{\lt}{lt}
\nco{\lgg}{lg}
\nco{\hgt}{ht}
\nco{\rge}{range}
\nco{\otp}{otp} 
\nco{\trunk}{tr}
\nco{\nex}{next}
\nco{\reduction}{red}
\nco{\supt}{supt}
\nco{\supp}{supp}
\nco{\Lim}{Lim}
\nco{\Leb}{Leb}
\nco{\modd}{mod}
\nco{\invariant}{inv}
\nco{\id}{id}
\nco{\RO}{RO}
\nco{\poss}{pos}
\nco{\Inc}{Inc} 
\nco{\Fn}{Fn}
\nco{\add}{add}
\nco{\borel}{Bor}
\nco{\cof}{cof}
\nco{\cov}{cov}
\nco{\height}{ht}
\nco{\lev}{Lev}
\nco{\levy}{Coll}
\nco{\non}{non}
\nco{\ot}{ot}
\nco{\rank}{Rank}
\nco{\splitting}{Split}
\nco{\splitlevel}{ns}
\nco{\stem}{stem}
\nco{\successor}{succ}
\nco{\Succ}{Suc}
\nco{\splsuc}{splsuc}
\nco{\Lev}{Lev}
\nco{\N}{\omega}
\nco{\uppersilver}{\poset{V}_\delta}
\nco{\cantor}{2^{\N}}
\nco{\uv}{\uppersilver}
\nco{\Power}{\mathcal{P}}

\nco{\nl}{\mathbf{NL}}

\nc{\la}{\langle}
\nc{\ra}{\rangle}

\begin{document}

\title[The non-democratic side of regular sets
]{The non-democratic side of regular sets}
\author{Giorgio Laguzzi}
\thanks{Albet-Ludwig-Universit\"at Freiburg, Mathematisches Institute, Ernst-Zermelo str. 1, 79104
Freiburg im Breisgau, Germany; Email: giorgio.laguzzi@libero.it}

\begin{abstract}
We analyse the role and the possible interpretations of regular sets studied in descriptive set theory and forcing theory from the point of view of social choice theory and we provide some insights about their \emph{non-democratic nature}. Moreover we also show that equity and Pareto principles can rule out the non-democratic aspects.
\end{abstract}
\maketitle

\section{Introduction}
The general question addressed by social choice theory is to extrapolate a reasonable social choice out of a set of individual choices. Arrow's original result (\cite{Arrow}) proved the impossibility of a social choice function working on three alternatives and satisfying Unanimity (U), Independence of Irrelevant Alternatives (IIA) and Non-Dictatorship (ND); then in \cite{Fish69} Fishburn turned Arrow's theorem into a \emph{possibility} result, showing that when dealing with an infinite set of individuals the existence of such a social choice function is somehow linked to the existence of non-principal ultrafilters, thus exhibiting a connection between social choice functions satisfying a non-dictatorial behavior and non-constructive objects. Many variants and more results have been extensively developed through the years by many other authors following this line of research.
For example, Lauwers and Van Liedekerke introduced a more systematic model-theoretic approach in \cite{Lauwers2}; Litak provided a game-theoretic approach with infinite games and the axiom of determinacy in \cite{Litak}; Mihara investigated the structure in terms of oracle computability and Turing degrees of the social choice function and the invisible dictator of Fishburn's theorem in \cite{Mihara1}.

This paper proposes more insights from a slightly different perspective about such a connection involving other irregular sets from set theory and topology, and providing some interpretations in terms of social choice functions. The use of irregular sets like non-Lebesgue, non-Ramsey sets have been used also in connection with social welfare relations satisfying Paretian principles, anonymity and other equity principles (e.g., Lauwers \cite{Lauwers1} and Zame \cite{Zame}). 

Focusing on the part concerning social choice theory on countably infinite population $P$, (with also a brief digression on the uncountable case in Section 7), we want to study some non-constructive objects from topology and descriptive set theory, and show that the dependence faced by Fishburn of more democratic social choice functions from non-constructive objects seems to be not only due to the starting principles used by Arrow (U and IIA), but probably hides a more structural motivation, and indeed even shows up in case of two alternatives.

Since the paper somehow attempts to blend together a mostly social choice-theoretic side and a mostly set-theoretic side, one needs to organize the two parts accordingly. Section \ref{s2} is designed to discuss some motivations in economic and social choice theory regarding the role of infinite populations, and so it can be seen mostly as a philosophical part. Section \ref{s3} starts to introduce the more technical tools from set theory, focusing on the  interpretation in the framework of social choice theory, and thus it may be understood as a bridge between the speculative part and the technical one. From Section \ref{s4} on, we deeply go into the technical and set theoretical issues, providing in the end the corollaries/interpretations in social choice theory with a focus on the case with two alternatives. Section \ref{s5} is concerning the case with more than two alternatives. In Section \ref{s6} we introduce equity and Pareto principles for social welfare relations and prove that these principles could be used to prevent anti-democratic social choice functions. Section \ref{s7} is a brief digression for uncountable populations. Finally, Section \ref{s8} summarizes the concluding remarks about the use of the axiom of choice and the axiom of determinacy in the context of social choice and economic theory. 

\section{Some considerations on infinite populations}\label{s2}

The study and the analysis of models with infinitely many individuals has a long-standing tradition in social choice and economic theory. Moreover in such a context the use of set-theoretic axioms like the axiom of choice (AC) and the axiom of determinacy (AD) has played a role. In this section, we attempt to give some overview and a contribution about the discussion on the role of  infinite populations and the use of these axioms from mathematical logic. The overview is far from being exhaustive and it is just mainly centered on what is pertaining mostly our further discussion. My position will be moderately in favour of a use of some fragment of AC, and rather positively oriented in analysing infinite populations. 

In \cite{Litak} Litak deals with an interesting and profound analysis and some criticisms about some use of infinite populations and non-constructive objects in economic and social choice models. In particular, he suggests to take in consideration as a method of selection of the legitimate use of AC (and the suitable use of the infinite setting in general) the so-called \emph{Hildenbrand criterion}, asserting: ``\emph{The relevance of the ideal case to the finite case has to be established}". Although the paper remarkably contains a lot of stimulating insights and I agree with some positions expressed in that paper, and in general with some criticisms, I would argue for a more optimistic position about the use of infinite sets of individuals, and try to be more nuanced about the preference between AC and AD in this context. In this section I mainly focus on the use of infinite population of individuals, and refer to the last section on the concluding remarks for the discussion about AC and AD.

In favor of an appropriate use of economic models with infinitely many individuals is Aumann in his pioneering  \cite{Au64}, where the author started a systematic study of markets in perfect competition by using a continuum of traders. The idea behind using infinitely many traders adopted by Aumann is that in analysing markets with perfect competition one needs a model/framework where the influence of each individual participant in ``negligible''. The author draws a parallel with the use of a continuum of particles in fluid mechanics to support the idea that, even though idealized, it can serve as a fruitful setting to understand the dynamic behavior of fluids, in some cases more than by considering the whole fluid as composed by $n$-many particles and studying the distinct movements of each particle. Beyond that, a very significant passage is the following \cite[pg. 41]{Au64}:

\vspace{2mm}
``\emph{It should be emphasized that our consideration of a continuum of traders is not merely a mathematical exercise; it is an expression of an economic idea. This is underscored by the fact that the chief result holds only for a continuum of traders - it is false for a finite number.}" 
\vspace{2mm}

This passage is rather important as it seems to suggest that the use of an infinite amount of individuals permits to analyse a certain idealized economic framework even better than the more realistic finite case, and moreover it permits to state that this choice is not just \emph{``merely a mathematical exercise; it is an expression of an economic idea"}. Even if not stated explicitly, it seems to be clear from that passage and from the context, that Aumann is strongly supporting the idea of using infinitely many traders as the \emph{most suitable} model for perfect competition. We remark that the purpose here is neither to support any preference between market economy in perfect competition vs monopolistic economy, nor other issues regarding political economy (which is far from the topic of this paper), but merely to stress that Aumann suggests a choice of an infinite population based on a reasoning pertaining models for economic theory, and not just as a mathematical abstraction. In \cite{Au64} while Aumann seems to therefore be rather positively oriented in the use of infinitely many traders, he seems to be suspicious about an unrestricted use of AC, as stated on page 44: ``\emph{Non-measurable sets are extremely ``pathological"; it is unlike that they would occur in the context of an economic model}".

In \cite{Mihara1} and \cite{Mihara2} Mihara draws a connection between computability and social choice theory. The author expresses a rather positive approach in using infinite populations. Specifically the author introduces also an interpretation and support some ideas that we find very suggestive. Beyond the usual interpretation of an infinite population as consisting of infinitely many people extending indefinitely through an infinite horizon, Mihara's other idea relies on the observation that an infinite set of individuals need not be understood as an infinite set of physical persons only (which might make the idealized setting rather abstract and unrealistic), but could perfectly be seen in a framework where there is a finite set of different persons but there is uncertainty described by a countably infinite set of states.  As an explicit example we mention the following passage \cite[pg. 4]{Mihara2}:

\vspace{2mm}
``\emph{This derivation of an infinite ``society'' as well as the domain restrictions might
seem artificial. However, they are in fact natural and even have some advantages.
First, inter-state comparisons are avoided, in the same sense that inter-personal
comparisons are avoided in Arrow's setting. Second, in this formulation, people
can express their preferences without estimating probabilities.}" 
\vspace{2mm}

 Mihara provides also an explicit example to support this interpretation (\cite[section 2.1.3]{Mihara2}). As a contribution to support Mihara's interpretation I also suggest the following one. 

Consider the case of a single person who has to make a choice between two alternatives (i.e., two candidates $a$ and $b$) and that this decision must be explicitely make at a certain point in the coming future; meanwhile her decision might ``oscillates'' depending on the particular instant we consider before the voting time. More formally, let $t_s$ denote the instant where we start to collect the preferences of the given person and $t_f$ be the final instant when the decision will be made. If we interpret the time as a continuum we could understood the interval $[t_s,t_f)$ as an infinite set (we can even reduce the focus on the rational instants, so to have a countable dense set of instants in $[t_s,t_f)$). In such a way we can see the final choice that the person will make at the voting instant $t_f$ as a choice \emph{aggregating} all individual choices made in all preceding instants; for example, if the person has thought of giving a preference to the candidate $a$ instead of $b$ in all instants, i.e. the sequence of preference is of the form $(a,a,a,a,a, \dots)$, at instant $t_f$ we expect an aggregating choice function giving $a$ as the final choice (unanimity principle); on the contrary if the person has denoted uncertainty, i.e., the sequence of individuals choices is of the form $(a,a,b,b,a,b,a,a,b,a,b,\dots)$ then the final aggregating choice will not be so clear. Hence under this interpretation the final choice at instant $t_f$ can be analysed as an aggregating choice of all rational instants in $[t_s,t_f)$. Note that the ordered sequence does not follow the order in terms of \emph{time}, but this should not be considered \emph{a priori} so bounding, since  an aggregating choice function need not be influenced on how the set of individuals is ordered, and in case of a countable set the well-order does not require any use of AC. Also note that this simple example could be seen as a particular case of Mihara's setting, since the individual choices at all rational instants $t \in [t_s, t_f)$ can be viewed as the states of uncertainty described by Mihara. A possible objection might be that in this example we treat $t_f$ as a kind of privileged instant, and not as the other $t$'s. However this is justified by the fact that the specific \emph{final} instant $t_f$ can be seen as the acme which aggregates all states of uncertainty preceding that specific moment $t_f$.

The reader interested in more details about the use of infinite populations both in social choice theory and social welfare pre-orders, also in relation with non-constructive objects could see the following selected list of papers: \cite{Chi2}, \cite{Fish69}, \cite{Lauwers2} (about social choice functions), and \cite{Chi96}, \cite{Lauwers1}, \cite{Zame}, \cite{Asheim2010} (about social welfare relations).

\section{Examples, basic notions and irrelevant coalitions} \label{s3}

Let us fix the general setting we aim to investigate.
We focus on the case of an alternative between two candidates $0,1$ and of a countably infinite population $P=\N$. We will focus on the case with more alternatives (and even countably infinite many) in Section \ref{s5}. 
Given any individual $i \in \N$ we denote by $x(i) \in \{ 0,1 \}$ the choice that the individual $i$ makes between the two candidates. The sequence $x \in 2^{\N}$ then represents all individual choices. In this formalization, given a set $F \subseteq 2^{\N}$ we can see it as a choice function assigning a value either $0$ or $1$ to a given sequence of individual preferences $x \in 2^{\N}$ as follows
\[
F(x)=1 \Leftrightarrow x \in F.
\]
Throughout the paper we will identify $F \subseteq 2^{\N}$ and its corresponding social choice function on $2^{\N}$.

Given a partial function $f: \N \rightarrow \{ 0,1 \}$ we use the notation $$N_f:= \{x \in 2^{\N}: \forall n \in \dom(f) (x(n)=f(n))  \}.$$

Moreover given a set $X$ and a subset $A \subseteq X$ we use the notation $A^c:= X \setminus A$, i.e. the complement of $A$. 

We start with informally presenting an example that we then turn into a more precise treatment in the second part of this section, and with further results in the subsequent sections.

\begin{definition}
We define $N_f$ to be a \emph{Silver condition} iff the partial function associated $f: \dom(f) \rightarrow \{0,1\}$ is such that the complement of the domain of $f$, in symbol $\dom(f)^c$, is infinite. 
We denote the set of all Silver conditions by $\silver$.
\end{definition}

It is well-known (and easy to check) that the $N_f$'s form a basis for a topology, which is called \emph{Silver topology} or \emph{Doughnuts topology}. 

\begin{definition}
A set $A \subseteq 2^{\N}$ is said to satisfy the \emph{Silver property} (or $\silver$-property) iff 
\[
\exists N_f \in \silver (N_f \subseteq A \vee N_f \cap A = \emptyset).
\]
\end{definition}
We refer to these sets also with the term \emph{Silver sets} (or \emph{$\silver$-sets}).
A well-known and easy argument shows that any set satisfying the Baire property w.r.t. the Silver topology is also a Silver set. 
A standard application of AC gives a non-Silver set. On the other side, it is possible to construct by using the method of forcing a model of set theory without AC where all sets are Silver (see \cite{BLH2005}). Non-Silver sets are usually considered \emph{irregular} sets from the point of view of descriptive set theory and so somehow they are considered on the negative side. In what follows we give a first glance that, on the contrary, from the point of view of social choice theory, non-Silver sets seems to share a more positive side.

\emph{Interpretation in the social choice framework.} Given a Silver condition $N_f$ put $a:= \dom(f)^c$. 
On the notation-side, in the context of social choice theory, we often refer to a set of individuals as a \emph{coalition}.  
Let us interpret $a$ as a coalition consisting of individuals who did not make a choice yet, and $f: a^c \rightarrow \{  0,1 \}$ as the function assigning to any individual $i \in a^c=\dom(f)$ the corresponding choice $f(i)$. 

Now assume we pick a Silver set $F$, and so we can find a Silver condition $N_f$ such that $N_f \subseteq F$ or $N_f \cap F = \emptyset$. Equivalently we can say that
\begin{itemize}
\item $\forall x \in N_f$, $F(x)=0$, or
\item $\forall x \in N_f$ $F(x)=1$.  
\end{itemize}
 So assume for instance that the former case occurs, i.e. $\forall x \in N_f$, $F(x)=0$. By definition of a Silver condition, every sequence of individual choices $x \in N_f$ has the property that every individual $i \in a^c$ chooses $f(i)$, i.e. $x(i)=f(i)$. That means  that no matter what the individuals $j \in a$ will choose, the social choice function $F$ assigns the value $0$ to all sequences of individual choices in $N_f$. In other words, the latter statement asserts that the single choices of the infinitely many individuals of the coalition $a$ are irrelevant to determine the social choice assigned. An equivalent argument of course holds in case $\forall x \in N_f$ $F(x)=1$.

Note that the argument above reveals a potentially non-democratic behavior of the social choice function $F$ whenever such $F$ is a Silver set, as it ignores the choices of infinitely many individuals of the population. Or, put it differently, if $F$ is a social choice function which is a Silver set, then we can find a co-infinite coalition that is already able to determine the social choice, no matter what the other infinitely many individuals not in this coalition will choose. 
On the contrary, if a social choice function is non-Silver, that means that for every Silver condition $N_f$ we can always find a sequence of individual choices $x_0 \in N_f$ whose associated social choice is $0$, i.e., $F(x_0)=0$, and another sequence $x_1 \in N_f$ whose social choice is $1$, i.e. $F(x_1)=1$. That in particular means that the single choices of the individuals $j \in a$ play a relevant role in the final decision. 
Hence, under this point of view non-Silver sets looks ``more democratic" than Silver sets, as they seem to pay more attention on the single choices of each individual. 

\begin{remark} \label{remark2}
One might object that the coalition $a$ of irrelevant individuals may be very slim compared to its complement, and so that there is nothing anti-democratic in that. For instance if 
\[
a:= \{ 1, 10, 100, 1000, \dots \}:= \{ 10^n: n \in \N \}
\]
then the set of irrelevant individuals is very \emph{sparse} compared to the set of relevant individuals $a^c$, and so it is plausible that the social choice function $F$ makes a decision which is not affected by individuals in $a$. 
\end{remark}

To overtake Remark \ref{remark2}, we need to consider Silver conditions having \emph{non-small} set $a$, so that the associated function $f$ cannot consider \emph{irrelevant} any \emph{non-small} set of  individuals. 

Before going to this study, we need to turn the intuitive observations above into precise and formal definitions and notions.

\begin{definition} \label{irrelevant}

Given a social choice function $F: 2^{\N} \rightarrow \{  0,1 \}$, $b \subseteq \N$ and $f : \N \rightarrow \{ 0,1 \}$ partial function with $\dom(f)=b^c$, we say that $b \subseteq \N$ is $(F,f)$-\emph{irrelevant} iff for every $y,z \in N_f:=\{ x \in 2^{\N}: \forall n \in \dom(f) (f(n)=x(n)) \}$, one has $F(z)=F(y)$ (or in other words $N_f \subseteq F$ or $N_f \cap F = \emptyset$). 

We say that $b \subseteq \N$ is \emph{$F$-irrelevant} iff there exists a partial function $f: \N \rightarrow \{0,1\}$ with $\dom(f)=b^c$ such that $b$ is $(F,f)$-irrelevant.
\end{definition}

Definition \ref{irrelevant} formalizes the idea that given $b \subseteq \N$ being $F$-irrelevant means that there is an $f$ deciding the individual choices of all other members not in $b$ which makes the choice of all members in $b$ irrelevant for the final collective choice $F$. 

In the following definitions, $I \subseteq \Power(\N)$ should be understood as a family of \emph{small} sets (e.g., in many cases $I$ is an ideal). Moreover we use the notation
\[
\begin{split}
I^+:=&  \{  b \subseteq \N: b \notin I \} \\
I^*:=&  \{  b \subseteq \N: b^c \in I \} \\
\end{split}
\]
\begin{definition} \label{democratic}
Let $F$ be a social choice function.

We say that $F$ is \emph{anti-democratic w.r.t. $I^+$} (or simply $I^+$\emph{-anti-democratic}) iff there exists $b \in I^+$, $b$ is $F$-irrelevant. 

We say that $F$ is \emph{anti-democratic w.r.t. $I^*$} (or simply $I^*$\emph{-anti-democratic}) iff there exists $b \in I^*$, $b$ is $F$-irrelevant. 
\end{definition}
Definition \ref{democratic} essentially asserts that a collective choice $F$ is anti-democratic if it ignores a coalition of voters which is considered not small.

\begin{example}
A social choice function $F$ satisfying the Silver property is $\Finite^+$-anti-democratic, where $\Finite := \{ b \subseteq \N: b \text{ is finite} \}$.
\end{example}

\begin{example}
Let $I:=\{ \{ n \}: n \in \N   \}$, which means $I$ is the family consisting only of all singletons, and let $F$ be a social choice function. If $F$ satisfies dictatorship (in the Arrowian sense), then $F$ is $I^*$-anti-democratic. Hence dictatorship in the Arrowian sense can be viewed as a particular case of anti-democratic behaviour as meant by Definition \ref{democratic}.
\end{example}

\subsection{A brief digression regarding the Baire property.}
We now make a short digression by considering also another case, even if we will not focus on this line of research on this paper, but we consider it worth mentioning. 
Let $F \subseteq 2^{\N}$ be a set satisfying the Baire property, and as above interpret $F$ as a social choice function.
Then there exists $t \in 2^{<\N}$ such that $N_t \cap F$ is comeager or $N_t \cap F^c$ is comeager.
So in other words, we can find $t \in 2^{<\N}$ such that:
\begin{itemize}
\item for \emph{almost all} $x \in \cantor$ extending $t$, $F(x)=1$, or 
\item for \emph{almost all} $x \in \cantor$ extending $t$, $F(x)=0$.
\end{itemize}
The term ``almost all" above means for ``comeager many", as the filter of comeager sets is here interpret as a notion of largeness, in a similar fashion as events with probability measure 1 are considered to happen ``almost certainly" (in that case by using the filter of measure 1 sets instead of the comeager filter).  
In line with this example, one can generalize the above definition as follows.

\begin{definition} \label{irrelevant2}
Let $\filter{H} \subseteq \Power(2^{\N})$ be a filter.  Given a social choice function $F: 2^{\N} \rightarrow \{  0,1 \}$, $b \subseteq \N$ and $f : \N \rightarrow \{ 0,1 \}$ partial function with $\dom(f)=b^c$, we say that $b \subseteq \N$ is $\filter{H}$\emph{-almost} $(F,f)$-\emph{irrelevant} iff there exists $B \in \filter{H}$ for every $y,z \in N_f \cap B$, one has $F(z)=F(y)$.

We say that $b \subseteq \N$ is \emph{$\filter{H}$-almost $F$-irrelevant} iff there exists a partial function $f: \N \rightarrow \{0,1\}$ with $\dom(f)=b^c$ such that $b$ is $\filter{H}$-almost $(F,f)$-irrelevant.
\end{definition}

\begin{definition} \label{democratic2}
Let $\filter{H} \subseteq \Power(2^{\N})$ be a filter, and $F$ a social choice function.

We say that $F$ is  $(\filter{H},I^+)$\emph{-anti-democratic} iff there exists $b \in I^+$, $b$ is $\filter{H}$-almost $F$-irrelevant.

We say that $F$ is  $(\filter{H},I^*)$\emph{-anti-democratic} iff there exists $b \in I^*$, $b$ is $\filter{H}$-almost $F$-irrelevant.
\end{definition}

From the above observation, a social choice function $F$ satisfying the Baire property is $(\filter{C},I^*)$-anti-democratic, where $I:= \Finite := \{ b \subseteq \N: b \text{ is finite} \}$ and $\filter{C}$ denotes the comeager filter.

Note that one of the main limitations of our example above about the Baire property concerns the notion of ``almost all" associated with the comeager filter. In fact, as we mentioned above, the notion of almost-all depends on a given filter and in some cases one might also get a rather unnatural interpretation. For instance, we can consider the well-known example of a comeager set which has measure zero. In this case it looks not so reasonable to consider such a set necessarily a large set.

\section{Dense Silver sets} \label{s4}

In this section we would like to avoid the objection from remark \ref{remark2} and consider a more appropriate ideal $I$ (as $\Finite$ exhibits the limitations suggested in Remark \ref{remark2}).
We show that the notion of \emph{density} associated with a subset of natural numbers can well serve for our purpose.  
\begin{definition} \label{density}
Given $a \subseteq \N$, let $\alpha_n := \frac{|a \cap [0,n]|}{n}$ and define the following:
\begin{itemize}
\item the \emph{density of $a$} as $d(a):= \lim_{n < \N} \alpha_n$
\item the \emph{upper density of $a$} as $\overline{d}(a):= \limsup_{n < \N} 	\alpha_n$
\item the \emph{lower density of $a$} as $\underline{d}(a):= \liminf_{n < \N} \alpha_n$.
\end{itemize}
\end{definition}
It is easy to check that $d$ is not defined for every set, whereas upper and lower density are always defined. Moreover $d$ is defined exactly when $d(a)=\overline{d}(a)=\underline{d}(a)$. 
Throughout this section we consider the family
\[
D_\delta:= \{ b \subseteq \N: \overline{d}(b)\leq \delta \}.
\]
We use the notation $D:=D_0$ and note that $D$ is an ideal. 

We also introduce the notion of a tree and the standard notation associated, as it is useful to develop the arguments throughout this section.
\vspace{1mm}

\textbf{Trees.} Given $s,t \in 2^{<\omega}$ the notation $s \subseteq t$ means $s$ is an initial segment of $t$, $s \subset t$ means $s$ is a proper initial segment, $s \perp t$ are incompatible (i.e. neither $s \subseteq t$ not $t \subseteq s$), and $|t|:=|\dom(t)|$ denote the length of $t$. We say that $T \subseteq 2^{<\N}$ is a tree iff $T$ is closed under initial segments, i.e. $\forall t \in T \forall s \subseteq t (s \in T)$.
We call any $t \in T$ a \emph{node of $T$}. We also use the following standard notation:
\begin{itemize}
\item the set of \emph{terminal nodes of $T$} $\term(T) := \{ t \in T: \neg \exists s \in T (t \subset s)\}$;
\item the \emph{height of $T$} as $\height(T):= \sup \{ |t|: t \in  T \}$;
\item the \emph{body of $T$} as $[T]:=  \{ x \in 2^{\N}: \forall n \in \N (x \restric n \in T) \}$;
\item the set of \emph{splitting nodes of $T$} $\splitting(T):= \{ t \in T: t^\conc 0 \in T \land t^\conc 1 \in T   \}$;
\item the set of \emph{splitting levels of $T$} $\lev(T):= \{ |t|+1: t \in \splitting(T)  \}$.
\item for $t \in\splitting(T)$, $j \in \{ 0,1 \}$, put $\text{SplSucc}(t^\conc \langle j \rangle)$ be the shortest splitting node extending $t^\conc \langle j \rangle$.

\end{itemize}
Note that for any $N_f \in \silver$ there exists an infinite tree $T$ such that $[T]=N_f$. These are called Silver trees (see \cite{BLH2005}). Note that $\dom(f)^c=\lev(T)$.
\begin{definition}
Let $\delta \in [0,1]$. We say that $N_f$ is a \emph{$\delta$-dense Silver condition} (and write $N_f \in \uppersilver$) iff $N_f$ is a Silver condition and $\overline{d}(\dom(f)^c) \geq \delta$.

We say that $N_f$ is an \emph{upper dense Silver condition} (and write $N_f \in \silver^*$) iff $N_f$ is a Silver condition and $\overline{d}(\dom(f)^c)>0$.
\end{definition} 
Note that $\silver_0 = \silver$. Like for $\silver$, for every $N_f \in \uppersilver$ there exists a tree $T$ such that $[T]=N_f$.
As above we can analogously define the notion of a $\uppersilver$-set. 
A standard argument shows that AC implies the existence of non-$\uppersilver$-sets and non-$\silver^*$-sets. We now want to show the non-constructive nature of a non-$\uppersilver$-set.
Use the following notation
\begin{itemize}
\item[] $\nb$ := \text{ there is a non-Baire set}
\item[] $\nv$ := \text{ there is a non-$\silver$ set}
\item[] $\nv_{\delta}$ := \text{ there is a non-$\uppersilver$ set}
\item[] $\nuv$ := \text{ there is a non-$\silver^*$ set}

\end{itemize}

Note that if a social choice function $F$ is a non-$\silver^*$ set, then $F$ is not $D^+$-anti-democratic. In particular the example given by the sparse sets as considered in Remark \ref{remark2} does not occur any longer. In fact, given any social choice function $F$ which is a $\silver^*$-set, and $b \subseteq \omega$, if $b$ is $F$-irrelevant, then $b \in D^+$ per definition. Note that if we consider the case of a $\silver_1$ social choice function, then the irrelevant set $b$ has full upper density, i.e. $\overline{d}(b)=1$.

\begin{lemma} \label{silver-cohen-2}
Fix $\delta \in [0,1]$. For every comeager set $C$ there is $N_f \in \uv$ such that $N_f \subseteq C$.
\end{lemma}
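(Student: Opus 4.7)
The plan is to fix a decreasing sequence $U_1 \supseteq U_2 \supseteq \cdots$ of open dense sets with $\bigcap_k U_k \subseteq C$, and then to build a Silver tree $T$ by committing its splitting/non-splitting structure level by level in stages. At each stage $k$ I will do two things: drive every branch of the partial tree built so far into $U_k$, and then insert a long block of consecutive splitting levels to keep the upper density of the splitting set $\geq \delta$.

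Concretely, at the beginning of stage $k$ I keep track of a committed length $M_{k-1}$, the number $\Sigma_{k-1}$ of splitting levels already committed below $M_{k-1}$, and the list $t_1, \dots, t_{N_{k-1}}$ of nodes of $T$ currently at level $M_{k-1}$ (with $N_{k-1} = 2^{\Sigma_{k-1}}$). In substage (a) I build a common suffix $e \in 2^{[M_{k-1}, M_{k-1} + L_k)}$ with $N_{t_i \concat e} \subseteq U_k$ for every $i \leq N_{k-1}$, and I commit these $L_k$ levels as non-splitting with values given by $e$. The suffix $e$ is produced by running through $i = 1, \dots, N_{k-1}$ and, at step $i$, lengthening the current suffix so that $N_{t_i \concat e} \subseteq U_k$; such a lengthening exists because $U_k$ is open dense, and once $N_{t_j \concat e} \subseteq U_k$ has been secured it persists under any further lengthening of $e$, since $N_{t_j \concat e'} \subseteq N_{t_j \concat e}$ whenever $e' \supseteq e$. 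In substage (b) I commit the next $K_k$ levels all as splitting, with $K_k$ chosen so large that
$$
\frac{\Sigma_{k-1} + K_k}{M_{k-1} + L_k + K_k} \geq \delta,
$$
which is possible because the ratio tends to $1$ as $K_k \to \infty$. I then set $M_k := M_{k-1} + L_k + K_k$, $\Sigma_k := \Sigma_{k-1} + K_k$, and iterate.

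The resulting partial function $f$, with splitting set $S = \dom(f)^c$, satisfies $|S \cap [0, M_k)|/M_k \geq \delta$ for every $k$, so $\overline{d}(S) \geq \delta$ and $N_f \in \uppersilver$. For the containment: every $x \in N_f = [T]$ has $x \restric (M_{k-1} + L_k) = t_i \concat e$ for the unique $i$ compatible with $x$, and by construction $N_{t_i \concat e} \subseteq U_k$, whence $x \in U_k$; varying $k$ gives $x \in \bigcap_k U_k \subseteq C$. I do not expect a serious obstacle here: this is essentially a classical Silver-style fusion against open dense sets, modified only by interleaving blocks of forced splittings to boost density. The delicate point to verify is that substages (a) and (b) do not interfere, because (b) acts above a level at which every live node already has its basic neighborhood contained in $U_k$, so adding splittings above that level cannot take any branch out of $U_k$.
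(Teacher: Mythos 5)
Your construction is correct and is essentially the same as the paper's: a fusion against a decreasing sequence of open dense sets, extending all current terminal nodes by a common suffix obtained by successive lengthening, then inserting a long block of forced splitting levels to push the density ratio back up, with the same bookkeeping for both the density and the containment in $\bigcap_k U_k$. The only wrinkle is the edge case $\delta=1$: there the requirement $(\Sigma_{k-1}+K_k)/(M_{k-1}+L_k+K_k)\geq\delta$ cannot be met by any finite $K_k$, so you should instead ask for the ratio to exceed, say, $\delta(1-1/k)$ (as the paper does), which still gives $\overline{d}(S)\geq\delta$ via the limit superior.
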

\begin{proof}
Fix $\delta \in [0,1]$. Let $D_n: n \in \N$ be a $\subseteq$-decreasing sequence of open dense sets such that $\bigcap_{n \in \N} D_n \subseteq C$. 
Recall that if $D$ is open dense, then $\forall s \in 2^{<\N}$ there exists $s^{\prime} \supseteq s$ such that $N_{s^{\prime}} \subseteq D$.
We build $T \in \silver_\delta$ by recursively constructing its nodes as follows.
\begin{itemize} 
\item First pick $t_{\emptyset} \in 2^{<\N}$ such that $N_{t_\emptyset} \subseteq D_0$, and then let 
\begin{align*}
F_0:=& \bigcup \{{t_\emptyset} ^\conc s: s \in 2^{|t_\emptyset|} \}\\
T_0:=& \{ t' \in 2^{<\N}: \exists t \in F_0 (t' \subseteq t) \}.
\end{align*}
\item Assume $F_n$ and $T_n$ already defined. 
Let $\{t_j: j \leq J \}$ enumerate all nodes in $F_n$, which are all terminal nodes in $T_n$. We proceed inductively as follows: pick $r_0 \in 2^{<\N}$ such that $N_{t_0^\conc r_0} \subseteq D_{n+1}$; then pick $r_1 \supseteq r_0$ such that $N_{t_1^\conc r_1} \subseteq D_{n+1}$; proceed inductively in this way for every $j \leq J$, so $r_j \supseteq r_{j-1}$ such that $N_{t_j^\conc r_j}\subseteq D_{n+1}$. 
Finally put $h=n \cdot  |t^\conc r_J|$ for $t \in \term(T_n)$.
Then define 
\begin{align*}
F_{n+1}:=& \bigcup \{ t^\conc {r_J}^\conc s: t \in \term(T_n), s \in 2^{h} \} \\
T_{n+1}:=& \{ t' \in 2^{<\N}: \exists t \in F_{n+1} (t' \subseteq t)\}.
\end{align*}
Note that by construction for all $t \in F_{n+1}$ we have $N_t \subseteq D_{n+1}$. Moreover by the choice of $h$ it follows that 
\begin{equation} \label{eq1}
\frac{|\text{Lev}(T_{n+1})|}{|\text{ht}(T_{n+1})|} \geq \delta  ( 1 - \frac{1}{n} ).
\end{equation}
\end{itemize} 
Finally put $T := \bigcup_{n \in \N} T_n$. By construction $[T] \in \silver_\delta$ as \ref{eq1} implies that $\overline{d}(\lev(T))=\delta$. 

It is left to check $[T] \subseteq \bigcap_{n \in \N} \in D_n$. To show that, fix arbitrarily $x \in [T]$ and $n \in \N$. By construction there is $t \in F_n$ such that $t \subset x$ and since $N_t \subseteq D_n$ we then get $x \in N_t \subseteq D_n$. 
\end{proof}

\begin{corollary}
$\nv \Rightarrow \nuv \Rightarrow \nv_{\delta} \Rightarrow \nb$
\end{corollary}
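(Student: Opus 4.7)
The plan is to handle the chain $\nv \Rightarrow \nuv \Rightarrow \nv_\delta \Rightarrow \nb$ in three steps, the first two being essentially free and the last one carrying the content (via Lemma~\ref{silver-cohen-2}).

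\medskip
\noindent\textbf{First two implications.} I would argue by contraposition from the inclusions
\[
\silver_\delta \subseteq \silver^* \subseteq \silver \quad (\text{for } \delta>0).
\]
Any $N_f \in \silver^*$ witnessing $\silver^*$-regularity of a set $A$ automatically witnesses $\silver$-regularity, and any $N_f \in \silver_\delta$ witnessing $\silver_\delta$-regularity witnesses $\silver^*$-regularity. Hence the very same set witnessing $\nv$ witnesses $\nuv$, and the very same set witnessing $\nuv$ witnesses $\nv_\delta$.

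\medskip
\noindent\textbf{The substantive step $\nv_\delta \Rightarrow \nb$.} I would prove the contrapositive: assuming every $A \subseteq 2^\N$ has the Baire property, I show every such $A$ is $\silver_\delta$-regular. By BP applied to $A$, either $A$ is meager or there exists $t \in 2^{<\N}$ with $N_t \setminus A$ meager. In the first case $A^c$ is comeager and Lemma~\ref{silver-cohen-2} directly produces $N_f \in \silver_\delta$ with $N_f \subseteq A^c$, hence $N_f \cap A = \emptyset$. In the second case, set $C := A \cup (2^\N \setminus N_t)$, which is comeager since its complement $N_t \setminus A$ is meager; then I need $N_f \in \silver_\delta$ lying inside $N_t \cap C = N_t \cap A$.

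\medskip
\noindent\textbf{Main obstacle: localising Lemma~\ref{silver-cohen-2}.} To place the Silver condition output by that lemma inside a prescribed $N_t$, I would rerun its inductive construction with the single modification that the starting node $t_\emptyset$ is required to extend $t$; this is possible because $D_0$ is open dense, so some $t_\emptyset \supseteq t$ satisfies $N_{t_\emptyset} \subseteq D_0$. The rest of the induction, including the density estimate~\eqref{eq1} guaranteeing $\overline{d}(\lev(T)) \geq \delta$, is unaffected, and the resulting tree $T$ now satisfies both $[T] \subseteq N_{t_\emptyset} \subseteq N_t$ and $[T] \subseteq \bigcap_n D_n \subseteq C$. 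Hence $[T] \subseteq N_t \cap A \subseteq A$, providing the $\silver_\delta$-witness of regularity. This localisation is cosmetic; it is the only place where any genuine care is required in the proof.
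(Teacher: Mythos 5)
Your proposal is correct and follows essentially the same route as the paper: the first two implications come from the inclusions $\silver_\delta \subseteq \silver^* \subseteq \silver$, and the third is the contrapositive via the Baire-property dichotomy (meager, or relatively comeager in some $N_t$) combined with Lemma~\ref{silver-cohen-2}. Your explicit localisation of the lemma's construction below a prescribed $N_t$ is exactly what the paper compresses into the parenthetical ``(with $\stem(T) \supseteq t$)''.
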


\begin{proof}
The first two implications are immediate. For the third one $\nv_{\delta} \Rightarrow \nb$ just recall that if $F$ satisfies the Baire property, then $F$ is meager or there exists $t \in 2^{<\omega}$ such that $N_t \cap F$ is comeager. In the former case, Lemma \ref{silver-cohen-2} gives $T  \in \silver_\delta$ such that $[T] \cap F = \emptyset$, while in the latter case the Lemma gives $T \in \silver_\delta$ such that $[T]\subseteq F$ (with $\stem(T) \supseteq t$). 
\end{proof}

So in Shelah's model where all sets have the Baire property, we also get $\neg \nv_{\delta}$. Moreover the following two lemmata show that the existence of non-$\uppersilver$ sets requires a strictly larger fragment of AC than the existence of non-Baire sets, and indeed we can build a model where all sets satisfy the $\uppersilver$-property, but there is a $\SSigma^1_2$ non-Baire set. 
\begin{lemma} \label{silver-cohen}
Let $\delta \in [0,1]$ and $T \in \silver_{\delta}$. Let $\bar \varphi: \splitting(T) \rightarrow 2^{<\omega}$ such that $\bar \varphi(\stem(T)):=\langle \rangle$ and for every $t \in \splitting(T)$ and $j \in \{  0,1 \}$, $$\bar \varphi(\text{SplSucc}(t^\conc \langle j \rangle)):= \bar \varphi(t)^\conc \langle j \rangle.$$ Put
$\varphi: [T] \rightarrow 2^\omega$ be the expansion of $\bar \varphi$, i.e. for every $x \in [T]$, $\varphi(x):= \bigcup_{n \in \omega} \bar \varphi(t_n)$, where $t_n:n \in \omega$ is a $\subseteq $-increasing sequence of splitting nodes in $T$ such that $x=\bigcup_{n \in \omega} t_n$. 

If $c$ is Cohen generic over $V$, then
\[
V[c] \models \exists T' \in \silver_{\delta} \land T' \subseteq T \land \forall x \in [T'] (\varphi(x) \text{ is Cohen over $V$}),
\]
\end{lemma}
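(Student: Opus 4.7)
The plan is to obtain $T'$ in $V[c]$ by showing that the Cohen real $c$ codes a generic filter for an auxiliary countable atomless forcing $\poset{Q}_T\in V$ of finite approximations, whose union will be the desired Silver tree. In $V$, let the conditions of $\poset{Q}_T$ be finite subtrees $U\subseteq T$ whose terminal nodes all lie in $\splitting(T)$ at a common height $\hgt(U)$, whose non-terminal nodes either split in $U$ (on the chosen splitting levels of $U$) or have a unique successor in $U$, and such that the ratio $|\lev(U)|/\hgt(U)\geq \delta\bigl(1-1/|\lev(U)|\bigr)$. Order by end-extension: $U'\leq U$ iff $U'\supseteq U$ and $\{s\in U':|s|\leq \hgt(U)\}=U$. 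Since $\poset{Q}_T$ is countable, separative, and atomless (below any $U$ one has incompatible extensions obtained by varying the choice of the next splitting level among the infinitely many still available splitting levels of $T$), Cohen's theorem yields a forcing isomorphism between $\poset{Q}_T$ and $\bC$, so $c$ codes a $\poset{Q}_T$-generic filter $G\in V[c]$; set $T':=\bigcup G$.

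The inclusion $T'\subseteq T$ is immediate and the density bookkeeping in $\poset{Q}_T$ ensures $\overline{d}(\lev(T'))\geq \delta$, mirroring the padding argument in Lemma \ref{silver-cohen-2}, so $T'\in \silver_{\delta}$. For the Cohen-ness of the branches, fix any dense open $D\subseteq 2^{<\omega}$ coded in $V$ and set
\[
\poset{D}_D:=\{U\in \poset{Q}_T:\bar\varphi(t)\in D\text{ for every }t\in \term(U)\}.
\]
This is dense: given $U$ with terminals $\{t_1,\dots,t_k\}\subseteq \splitting(T)$, use density and openness of $D$ to pick $\sigma_i\supseteq \bar\varphi(t_i)$ in $D$ of a common length; set $s_i:=\bar\varphi^{-1}(\sigma_i)\in \splitting(T)$ and extend $U$ to $U'\leq U$ with terminal nodes $\{s_i\}$, padding non-splitting levels to restore the density constraint (which the $1/n$-slack permits). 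By genericity some $U_D\in G$ lies in $\poset{D}_D$. For any $x\in [T']$, $x$ passes through a unique terminal $t$ of $U_D$, and since $t\in \splitting(T)$ with $T$-splitting-depth equal to $|\bar\varphi(t)|$, the string $\bar\varphi(t)$ is an initial segment of $\varphi(x)$; as $\bar\varphi(t)\in D$, the real $\varphi(x)$ meets $D$. Since $D$ was arbitrary, $\varphi(x)$ is Cohen over $V$.

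The main obstacle will be the bookkeeping reconciling the density-$\delta$ constraint with the extensions required for the dense sets $\poset{D}_D$: one must check that the finite padding used to move terminal nodes into $\bar\varphi^{-1}(D)$ and to a common height never breaks the $\delta(1-1/n)$ bound in $\poset{Q}_T$. This is handled exactly as in Lemma \ref{silver-cohen-2}, by permitting the total height to grow fast enough between stages so that any finite adjustment is absorbed while the ratio of splitting levels stays within $1/n$ of $\delta$, thereby yielding $\overline{d}(\lev(T'))=\delta$ in the limit.
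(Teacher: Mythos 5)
Your proposal is correct and follows essentially the same route as the paper: a countable, atomless forcing of finite tree-approximations (hence equivalent to $\cohen$, so the Cohen real yields a generic filter), with one family of dense sets pushing all terminal nodes into $\bar\varphi^{-1}(D)$ for each dense open $D\in V$ and a second density/padding argument securing $\overline{d}(\lev(T'))\geq\delta$. The only differences are cosmetic: the paper forces with finite subtrees of $2^{<\omega}$ on the $\bar\varphi$-image side and pulls back via $\bar\varphi^{-1}$, handling the $\delta$-bound through dense sets $E_k$ rather than hardwiring it into the conditions as you do, and both write-ups leave the same bookkeeping (and the uniformity of the resulting Silver tree) at the same level of detail.
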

\begin{proof}
Consider the following forcing $\poset{P}$ consisting of finite trees $p \subseteq 2^{<\N}$ such that
$\forall s,t \in \term(p)$, $|s|=|t|$
ordered by end-extension: $p' \leq p$ iff $p' \supseteq p$ and $\forall t \in p'\setminus p \exists s\in \term(p) (s \subseteq t)$. 

Note $\poset{P}$ is countable and non-trivial, thus it is equivalent to Cohen forcing $\cohen$.
Let $T_G:= \bigcup G$, where $G$ is $\poset{P}$-generic over $V$. We claim that $T':= \bar \varphi^{-1}"T_G$ satisfies the required properties. It is left to show that:
\begin{enumerate}
\item for every $x \in [T']$ one has $\varphi(x)$ is Cohen, i.e., every $y \in [T_G]$ is Cohen;
\item $T' \in \silver_\delta$.
\end{enumerate}

For proving (1), let $D$ be an open dense subset of $\cohen$ and $p \in \poset{P}$. It is enough to find $p' \leq p$ such that every $t \in \term(p')$ is a member of $D$.

The idea is the same as in the proof of Lemma \ref{silver-cohen-2}.
Let $\{ t_j: j < N \}$ enumerate all terminal nodes in $p$ and pick $r_J$, so that for every $j < N$, $N_{t_j^\conc r_J} \subseteq D$. Then put $p':= \{t \in 2^{<\omega}: \exists t_j  \in \term(p) (t \subseteq t_j^\conc r_J \}$. Hence $p' \leq p$ and $p' \force \forall y \in [T_G] \exists t \in p' \cap D (t \subset y)$. Hence we have proven that 
\[
\force_\cohen \forall y \in [T_G] \exists t \in  D (t \subset y),
\]
which means every $y \in [T_G]$ is Cohen over $V$.

For proving (2), it is enough to note that, given $k \in \N$ and $p \in \poset{P}$ arbitrarily, one can find $q \leq p$ such that
 
\begin{equation} \label{eq2}
\frac{|\text{Lev}(\bar \varphi^{-1}"q)|}{\text{ht}(\bar \varphi^{-1}"q)} \geq \delta (1 - \frac{1}{2^{k}}).
\end{equation}
That means for every $k \in \omega$ the set $E_k$ consisting of $q$'s as in (\ref{eq2}) is dense, and so $\Lev(T')$ has upper density $\geq \delta$.
\end{proof}

\begin{proposition} \label{cohen-uppersilver}
Let $G$ be $\cohen_{\omega_{1}}$-generic over $V$. There is an inner model $M$ of $V[G]$ such that
\[
M \models \forall F \subseteq 2^{\N} \forall N_f \in \silver_\delta \exists N_g \leq N_f (N_g \subseteq F \vee N_g \cap F=\emptyset).
\]
(In particular, in $M$ every $F \subseteq 2^\omega$ satisfies the $\silver_\delta$-property.)
\end{proposition}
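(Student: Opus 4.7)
The plan is to take $M$ to be a Shelah-style inner model of $V[G]$, specifically the model in which every set of reals inherits the Baire property---this is the model constructed in Shelah's classical argument showing that one can obtain the Baire property for all sets of reals without an inaccessible. I will argue that in $M$ the full $\silver_\delta$-property also holds, by combining Lemmas \ref{silver-cohen-2} and \ref{silver-cohen} with a routine ccc reduction.

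Let $F \in M$ and $N_f = [T] \in \silver_\delta \cap M$ be given. Since $\cohen_{\omega_1}$ is ccc and sets in $M$ are coded by nice names depending on only countably many coordinates of $G$, there is a countable ordinal $\gamma$ such that $T \in V[G \restric \gamma]$ and $F$ is definable over $V[G \restric \gamma]$ from a real parameter $r \in V[G \restric \gamma]$ together with ordinal parameters. Inside $V[G]$ we then have a Cohen real $c$ over $V[G \restric \gamma]$ (say the $\gamma$-th coordinate of the generic, or any subsequent tail real). Applying Lemma \ref{silver-cohen} within $V[G \restric \gamma][c]$ yields $T' \in \silver_\delta$ with $T' \subseteq T$ such that every branch $x \in [T']$ has $\varphi(x)$ Cohen-generic over $V[G \restric \gamma]$, where $\varphi : [T] \to 2^\omega$ is the canonical bijection associated with $T$.

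Now I transfer the homogeneity question to $2^\omega$: let $\tilde F := \{y \in 2^\omega : \varphi^{-1}(y) \in F\}$, which lives in $M$ (the map $\varphi$ being definable from parameters in $V[G \restric \gamma]$). By the Baire property in $M$, there is a finite string $s \in 2^{<\omega}$ with $s \in V[G \restric \gamma]$ such that $\tilde F \cap N_s$ is either comeager or meager in $N_s$. Assume the former (the other case is symmetric). An application of Lemma \ref{silver-cohen-2} produces a $\silver_\delta$-tree $T'' \subseteq 2^{<\omega}$ with $[T''] \subseteq \tilde F \cap N_s$, and pulling back through $\varphi$ yields the desired $N_g \leq N_f$ with $N_g \subseteq F$.

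The delicate point is to verify that the pullback via $\varphi$ of $T''$ remains a genuine $\silver_\delta$-tree---i.e.\ that its splitting levels, measured in $\omega$, still have upper density at least $\delta$. Because $\varphi$ sends the $k$-th splitting level of $T$ (in $\omega$) to level $k$ of $2^{<\omega}$, a black-box application of Lemma \ref{silver-cohen-2} would rescale splitting densities and could destroy the $\delta$-bound in the ambient level scale. The remedy I envision is to apply the construction of Lemma \ref{silver-cohen-2} \emph{directly inside} $T'$, aligning the selected splitting levels of the new tree with the splitting levels of $T'$ (which already carry upper density $\geq \delta$ in $\omega$) while absorbing the dense-open sets witnessing comeagerness of $\tilde F \cap N_s$. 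This density bookkeeping is completely analogous to the one carried out in both preceding lemmas and constitutes the only non-routine part of the proof.
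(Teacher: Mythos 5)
Your proof has a genuine gap, and it sits exactly where you invoke the Baire property inside $M$. The hypothesis of Proposition \ref{cohen-uppersilver} is only that $G$ is $\cohen_{\omega_1}$-generic over an arbitrary ground model $V$: no inaccessible and no Shelah-style amalgamation forcing is available, so there is no inner model of $V[G]$ containing all the reals in which every set of reals has the Baire property. Shelah's ``Baire property without an inaccessible'' model is the extension by a quite different (sweet, amalgamated) forcing and is not an inner model of a plain Cohen extension. The Solovay-style route to the Baire property would require that, for each relevant $\gamma$, the set of reals Cohen-generic over $V[G\restric\gamma]$ be comeager in $V[G]$, and after adding only $\omega_1$ Cohen reals this can fail. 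Worse, assuming full Baire property in $M$ contradicts the declared purpose of the proposition: the paper introduces it precisely to separate $\nv_{\delta}$ from $\nb$, i.e.\ to produce an $M$ in which every set has the $\silver_\delta$-property while a $\SSigma^1_2$ non-Baire set still exists. If all sets of $M$ had the Baire property, the conclusion would follow trivially from Lemma \ref{silver-cohen-2} (that is exactly the corollary $\nv_{\delta} \Rightarrow \nb$), and the proposition would say nothing new.

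The missing idea is that Lemma \ref{silver-cohen} is the engine of the proof, not a preprocessing step: one does not need comeager many Cohen generics over $V[G\restric\alpha]$, only a single $\silver_\delta$-tree's worth of them, and one Cohen real suffices to add such a tree. The paper's argument (following \cite[Proposition 3.7]{BLH2005}) takes $M$ to be the inner model of $\text{On}^\omega$-definable sets (e.g.\ $L(\mathbb{R})^{V[G]}$), absorbs the parameters and $N_f$ into $V[G\restric\alpha]$, and then works with the Boolean values $b_0$ and $b_1$ of the two possible truth values of $\Phi(\varphi^{-1}(\dot c),v)$, where $\dot c$ names the next Cohen real. Homogeneity of Cohen forcing gives $b_0 \wedge b_1 = \0$ and $b_0 \vee b_1 = \1$, and Lemma \ref{silver-cohen} then yields $N_g \leq N_f$ all of whose branches are sent by $\varphi$ to Cohen reals over $V[G\restric\alpha]$ landing entirely inside $b_0$ or entirely inside $b_1$; evaluating $\Phi$ along these generics gives $N_g \cap F = \emptyset$ or $N_g \subseteq F$ in $V[G]$. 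Note also that the density rescaling problem you flag at the end disappears in this argument: the $\silver_\delta$-subtree is built directly inside $T$ by Lemma \ref{silver-cohen}, which already performs the upper-density bookkeeping in the ambient scale, so no pullback of a tree constructed in $2^{<\omega}$ is ever needed.
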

\begin{proof}
The argument is standard and is essentially the same as in the proof of \cite[Proposition 3.7]{BLH2005}. For the reader convenience and for the sake of completeness, we give some details.
Fix $\delta \in [0,1]$. 
Let $G$ be $\bC_{\omega_1}$-generic over $V$. 

Let $F$ be an $\text{On}^\omega$-definable set of reals, i.e. $F:= \{x \in 2^\omega: \Phi(x,v)   \}$ for a formula $\Phi$ with parameter $v \in \text{On}^\omega$, and let $N_f \in \silver_\delta$.  We aim to find $N_g \leq N_f$ such that $N_g \subseteq F$ or $N_g \cap F=\emptyset$. 

First note that we can absorb $v$ and $N_f$ in the ground model, i.e, we can find $\alpha<\omega_1$ such that $v, N_f \in V[G \restric \alpha]$. Let $\varphi: N_f \rightarrow 2^\omega$ be as in Lemma \ref{silver-cohen}.
Let $c= G(\alpha)$ be the next Cohen real and write $\bC$ for
the $\alpha$-component of $\bC_{\omega_1}$.

There are $b_0 =  \big \llbracket \llbracket (\Phi(\varphi^{-1}(c),v)) \rrbracket_{\cohen_{\alpha}} = \mathbf{0}  \big \rrbracket_\cohen$ and
    $b_1 =  \big \llbracket \llbracket
    \Phi(\varphi^{-1}(c),v))\rrbracket_{\cohen_{\alpha}}= \mathbf {1}  \big \rrbracket_\cohen$, and by $\cohen$-homogeneity $b_0 \land b_1 = \0$ and $b_0 \vee b_1=\1$.
Hence, by applying Lemma \ref{silver-cohen}, one can then find $N_g \leq N_f$ such that $N_g \subseteq b_0$ or $N_g \subseteq b_1$ and for every $x \in N_g$, $\varphi(x)$ is Cohen over $V[G \restric \alpha]$. We claim that $N_g$ satisfies the required property.
\begin{itemize}
\item Case $N_g \subseteq b_1$: note for every $x \in N_g$, $\varphi(x)$ is Cohen over $V[G \restric \alpha]$, and so $V[G\restric \alpha][\varphi(x)] \models \llbracket \Phi(\varphi^{-1}(\varphi(x)),v)=\mathbf{1} \rrbracket_{\cohen_{\omega_1}}$. Hence $V[G] \models \forall x \in N_g(\Phi(x,v))$, which means $V[G] \models N_g \subseteq F$.  
\item Case $N_f \subseteq b_0$: we argue analogously and get $V[G] \models \forall x \in N_g(\neg \Phi(x,v))$, which means $V[G] \models N_g \cap F = \emptyset$.
\end{itemize}
\end{proof}

The limitations showed in this section actually provides also rather negative insights about the definability of non-democratic collective functions. 
As we know that in ZFC one can prove that every $\SSigma^1_1$ set satisfies the Baire property, and due to $\nuv \Rightarrow \nb$ we then obtain the following.
\begin{corollary}
Every $\SSigma^1_1$ social choice function is $D^+$-anti-democratic. 
\end{corollary}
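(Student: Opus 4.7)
The plan is to observe that this corollary is essentially the pointwise content of the implication chain $\nv \Rightarrow \nuv \Rightarrow \nv_\delta \Rightarrow \nb$ established in the previous corollary, combined with one classical fact. Unwinding the definitions, saying that $F$ is $D^+$-anti-democratic amounts to saying that $F$ satisfies the $\silver^*$-property, so the target is to show that every $\SSigma^1_1$ set has the $\silver^*$-property.

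First I would invoke the classical ZFC theorem of Lusin--Sierpi\'nski that every $\SSigma^1_1$ subset of $2^{\N}$ has the Baire property, so from now on we may assume that our fixed $F \subseteq 2^\N$ has the Baire property. Next I would extract the pointwise version of the argument used in the previous corollary for $\nv_\delta \Rightarrow \nb$: if $F$ has the Baire property, then $F$ satisfies the $\silver_\delta$-property for every $\delta \in [0,1]$. Indeed, either $F$ is meager, in which case $F^c$ is comeager and Lemma \ref{silver-cohen-2} produces $N_g \in \silver_\delta$ with $N_g \subseteq F^c$, i.e.\ $N_g \cap F = \emptyset$; or there exists $t \in 2^{<\N}$ such that $N_t \cap F$ is comeager, in which case the same lemma (applied with the minor modification that the construction starts from a stem $t_\emptyset \supseteq t$, exactly as pointed out in the proof of the previous corollary) yields $N_g \in \silver_\delta$ with $N_g \subseteq F$.

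Finally, fixing any $\delta \in (0,1]$, the resulting $N_g \in \silver_\delta$ has $\overline{d}(\dom(g)^c) \geq \delta > 0$. Setting $b := \dom(g)^c$, the very definition of $N_g \subseteq F$ or $N_g \cap F = \emptyset$ says that $b$ is $(F,g)$-irrelevant, hence $F$-irrelevant; and $b \in D^+$ since $\overline{d}(b) > 0$. Thus $F$ is $D^+$-anti-democratic. There is no real obstacle here: the only step that requires any care is recognising that the argument for $\nv_\delta \Rightarrow \nb$ given above is genuinely pointwise (each instance of the Baire property for an individual $F$ is converted into an instance of the $\silver_\delta$-property for that same $F$), so it applies uniformly to the analytic pointclass once Lusin--Sierpi\'nski is cited.
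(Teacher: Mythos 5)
Your proposal is correct and is essentially the paper's intended argument: the paper justifies this corollary in one line by citing the classical Baire property of $\SSigma^1_1$ sets together with the (pointwise) content of the implication $\nv_\delta \Rightarrow \nb$ proved via Lemma \ref{silver-cohen-2}, which is exactly the chain you spell out. Your only additions — making explicit that the argument is pointwise, that the construction must be started above the stem $t$ in the non-meager case, and that a $\silver_\delta$-condition with $\delta>0$ witnesses $D^+$-anti-democracy — are faithful elaborations of what the paper leaves implicit.
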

Even if in a different setting, this expectation is in line with Mihara's \cite{Mihara1} and \cite{Mihara2}, where it is proven that social choice function \emph{\'a la Fishburg} cannot have a recursive nature. Hence, the corollary above should be understood as another signal that non-democratic social choice functions seem to be hardly definable. 

Another consequence we get is the following. We go back to analyse the role of AD in this context in the concluding remarks.

\begin{corollary} \label{corollary1}
Assume AD. Then every social choice function is $D^+$-anti-democratic.
\end{corollary}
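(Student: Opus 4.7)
The plan is to combine the Mycielski--Świerczkowski theorem (that under AD every set of reals has the Baire property) with the implication $\nv_{\delta}\Rightarrow\nb$ already established in the previous corollary. Taking the contrapositive of that implication, if every subset of $2^{\N}$ has the Baire property then every subset satisfies the $\silver_{\delta}$-property. Since under AD both hypotheses needed to invoke this chain are available, the conclusion should drop out without any new construction.

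Concretely, I would fix some $\delta\in(0,1]$ (say $\delta=\tfrac{1}{2}$; any positive value works) and let $F\subseteq 2^{\N}$ be an arbitrary social choice function. Under AD, $F$ has the Baire property, hence so does its complement. Then either $F$ is meager or there is $t\in 2^{<\N}$ with $N_t\cap F$ comeager in $N_t$. In the first case $F^c$ is comeager and Lemma \ref{silver-cohen-2} (applied to $F^c$) yields an $N_g\in\silver_{\delta}$ with $N_g\subseteq F^c$, i.e.\ $N_g\cap F=\emptyset$. In the second case, a straightforward adaptation of Lemma \ref{silver-cohen-2} (starting the recursion from $t_{\emptyset}:=t$ instead of $\emptyset$) produces $N_g\in\silver_{\delta}$ with $\stem(N_g)\supseteq t$ and $N_g\subseteq F$.

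In either case we have $N_g\in\silver_{\delta}$ with $N_g\subseteq F$ or $N_g\cap F=\emptyset$. Setting $b:=\dom(g)^c$, the very definition of $\silver_{\delta}$ gives $\overline{d}(b)\geq\delta>0$, so $b\in D^+$; and the dichotomy on $N_g$ is precisely the statement that $b$ is $(F,g)$-irrelevant, hence $F$-irrelevant per Definition \ref{irrelevant}. Therefore $F$ is $D^+$-anti-democratic.

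I do not expect any genuine obstacle: the work has been done in Lemma \ref{silver-cohen-2} and in the implication $\nv_\delta\Rightarrow\nb$. The only mildly subtle point is the remark on prescribed stem in Lemma \ref{silver-cohen-2}, which is however immediate from inspection of its construction. It might be worth noting in passing that the same argument shows, more strongly, that under AD every $F$ enjoys the full $\silver_{\delta}$-property for every $\delta\in[0,1]$, which is a formally stronger conclusion than $D^+$-anti-democracy (one may even select the irrelevant coalition $b$ to have $\overline{d}(b)$ arbitrarily close to $1$).
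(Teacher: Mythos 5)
Your argument is correct and is precisely the route the paper intends (the paper leaves the corollary unproved because it follows immediately from the chain AD $\Rightarrow$ all sets have the Baire property $\Rightarrow$ (by the contrapositive of $\nv_\delta \Rightarrow \nb$, i.e.\ Lemma \ref{silver-cohen-2}) all sets are $\silver_\delta$-sets $\Rightarrow$ every social choice function is $D^+$-anti-democratic, exactly as you spell out). The only quibble is attributional: the Baire property under AD comes from the Banach--Mazur game rather than the Mycielski--\'Swierczkowski theorem, which concerns Lebesgue measurability; this does not affect the argument.
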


\section{More than two alternatives} \label{s5}

We now analyse the case when each individual can choose among a set with more than two alternatives. In particular we distinguish the case when there are finitely many alternatives, i.e. $A:= \{ a_i: i< K\}$, with $K>2$ finite, and $\N$ alternatives, i.e. $A:=\{ a_i: i \in \N \}$.

\subsection{Finitely many alternatives}
We start with the former case. So w.l.o.g  let $A=K$ (for some finite $K>2$) be the set of alternatives that each individual $i \in \N$ can choose, i.e. $x(i) \in K$. We can develop the notion of Silver-like conditions in a similar fashion as in the previous sections; given $f: \N \rightarrow K$ partial function, let $N_f:=\{ x \in K^{\N}: \forall n \in \N (x(n)=f(n)) \}$. Moreover we can naturally generalizes the notion of Silver and dense Silver forcing in this context, and we refer to them as $\silver_K$ and $\uppersilver_K$.

In this framework, we think of a social choice function as an $F: K^{\N} \rightarrow K$, i.e. $F$ selects the collectively preferred candidate in $K$ for every given sequence $x \in K^{\N}$ of individual choices. In this framework with more than two alternatives, the suitable version of Silver property generalises as follows. 

\begin{definition} \label{gen-silver-property}
Let $F: K^{\N} \rightarrow K$ be a social choice function, and $H:=\{ h_i: i<k \}$, with $k \leq K$, be a family of pairwise disjoint subsets of $K$.

We say that $F$ satisfies the \emph{$(\uppersilver_K,H)$-property} iff for every $N_f \in \uppersilver_K$ there exists $N_g \leq N_f$ and $h \in H$ such that $\forall x \in N_g$, $F(x) \in h$.

We say that $F$ satisfies the \emph{$\uppersilver_K$-property} iff for every $N_f \in \uppersilver_K$ there exists $N_g \leq N_f$ and $k \in K$ such that $\forall x \in N_g$, $F(x) = k$.
\end{definition}

We can easily check that Lemma \ref{cohen-uppersilver} also holds for the $\uppersilver_K$-property. 
Fix arbitrarily $K > 2$. The idea simply relies on the following recursive argument. Let $F: K^\omega \rightarrow K$ be a social choice function and $N_f \in  \uppersilver_K$. Start with $H_0:=\{ h_0, h'_0  \}$ partition of $K$; we can use the same argument as in Lemma \ref{cohen-uppersilver} in order to obtain $N_{f_0} \leq N_{f}$ such that 
\[
\forall x \in N_{f_0} (F(x)\in h_0 ) \vee \forall x \in N_{f_0} (F(x) \in h'_0).
\]
W.l.o.g. assume the former case occurs and pick $h_0$ (otherwise we use the same argument by picking $h'_0$). Then pick $H_1:=\{ h_1,h'_1 \}$ partition of $h$ and use again the same argument as in the proof of Lemma \ref{cohen-uppersilver} in order to find $N_{f_1} \leq N_{f_0}$ such  that  
\[
\forall x \in N_{f_1} (F(x)\in h_1 ) \vee \forall x \in N_{f_1} (F(x) \in h'_1).
\]
It is clear that one can recursively proceed similarly as soon as we find $N_{f_m} \leq N_{f}$ and $h_m$ consisting of a single element $k \in K$, and so $\forall x \in N_{f_m}$, one has $F(x)=k$, as desired.

Also, an analog of Lemma \ref{silver-cohen-2} holds in this case with $K$ alternatives.

\subsection{Infinitely many alternatives} 

Let $A=\N$ and define $N_f:=\{ x \in \N^{\N}: \forall n \in \N (x(n)=f(n)) \}$, for a given partial function $f:\N \rightarrow \N$, and consider the analog Silver-like forcing in $\N^{\N}$ denoted by $\silver_\infinite$. Definition \ref{gen-silver-property} naturally generalises in the infinite case as well, simply by replacing the natural number $K$ with $\N$. 
However the situation with infinitely many alternatives gives rise to very different results, as we show in this section.

Trees associated with conditions in $\silver_\infty$ are a particular kind of the following forcing notion, which has been studied in \cite{KL2017}: we say that a tree $T \subseteq \omega^{<\omega}$ is a \emph{full-Miller tree} (in symbol $T \in \poset{FM}$) iff for every $s \in T$ there exists $t \supseteq s$ such that for all $n \in \omega$, $t^\conc n \in T$. 

Note that $\silver_\infty \subseteq \poset{FM}$. The forcing $\poset{FM}$ adds Cohen reals (see \cite{KL2017}) and the same argument shows $\silver_\infty$ adds Cohen reals. 
As a consequence, in line with the proof of \cite[Proposition 4.4]{LSR2020}, one can prove $\nb \Rightarrow \nv_\infty$, where $\nv_\infty$ is the statement asserting that there exists a non-$\silver_\infty$-set, in line with the notation introduced in Section \ref{s4}.

Moreover Brendle showed that Lemma \ref{silver-cohen} does not hold in such a case, which means that Cohen forcing $\cohen$ cannot add a tree $T \in \poset{FM}$ (and \emph{a fortiori} cannot add $T \in \silver_\infty$). 
\begin{proposition}[Brendle, \cite{Brendle2}] \label{cohen-silver-inf} 
$\cohen$ does not add a condition $N_f \in \silver_\infinite$ of Cohen reals.
\end{proposition}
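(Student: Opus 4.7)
I would prove the proposition by contradiction. Suppose toward contradiction that some $p \force_\cohen \dot T \in \silver_\infinite$ and $p$ forces every branch of $[\dot T]$ to be Cohen-generic over $V$. The aim is to construct, in $V$, a real $x \in \omega^\omega$ together with a condition $r \leq p$ such that $r \force x \in [\dot T]$; this yields a contradiction because $x \in V$ cannot be Cohen-generic over $V$.

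The plan is to work with the ground-model shadow tree $T^{*} := \{ s \in \omega^{<\omega} : \exists q \leq p\, (q \force s \in \dot T) \}$. I would first verify that $T^{*}$ is full-Miller in $V$, i.e., $T^{*} \in \poset{FM}$: given $s \in T^{*}$ witnessed by $q$, the assumption $\dot T \in \silver_\infinite$ lets me pass to $q' \leq q$ and some $m > |s|$ such that $q'$ forces $m$ to be a splitting level of $\dot T$ and decides the forced values of $\dot f$ on the interval $(|s|, m)$. Then $q'$ witnesses that some specific $s' \supseteq s$ of length $m$ lies in $T^{*}$ together with all its $\omega$-many immediate successors $s'{}^\conc k$, for every $k \in \omega$.

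I would then attempt to construct $x$ and $r$ by a recursion producing a decreasing sequence $p = r_0 \geq r_1 \geq \ldots$ in $\cohen$ together with $\emptyset = s_0 \subsetneq s_1 \subsetneq \ldots$ with $r_n \force s_n \in \dot T$, and then setting $x := \bigcup_n s_n$. The driving observation is the following dichotomy: at a stage where $r_n$ already forces $|s_n| \in \dot L$, the same $r_n$ also forces $s_n{}^\conc k \in \dot T$ for every $k \in \omega$, so one may freely extend $s_n$ without strengthening $r_n$; only at non-splitting levels does $r_n$ need to be extended in order to decide the forced value of $\dot f$. The final goal is that $\{r_n\}$ become eventually constant so that the common value $r$ is a genuine element of $\cohen$ forcing $x \in [\dot T]$.

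The hard part is precisely this stabilization, and this is where the contrast with Lemma \ref{silver-cohen} is decisive: in the $\silver_\delta$ setting each splitting node has only two immediate successors, and a finite iteration over the terminal nodes of a growing finite approximation can be absorbed inside a single Cohen condition $r_J$; in the $\silver_\infinite$ setting each splitting node has $\omega$-many successors and no such finite iteration is available. Brendle turns this obstruction around into a positive impossibility proof: by carefully exploiting the $\omega$-branching at splitting levels together with the ccc of $\cohen$, the recursion is organized so that only finitely many genuine strengthenings of the Cohen condition are ever needed, after which $\{r_n\}$ stabilizes and delivers the desired $x \in V$ and $r \leq p$, producing the contradiction.
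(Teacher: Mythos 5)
There is a genuine gap, and it sits exactly where you place it: the ``stabilization'' of the sequence $r_0\geq r_1\geq\dots$ is not a technical step that can be outsourced to Brendle; it is the entire content of the proposition, and the route you describe cannot deliver it. Note that your construction of $x$ and of the $r_n$'s never uses the hypothesis that the branches of $\dot T$ are forced to be Cohen --- that hypothesis enters only in the final line, to produce the contradiction. So if the recursion stabilized as claimed, you would have proved the much stronger statement that \emph{every} Cohen-name for a condition in $\silver_\infinite$ admits a ground-model branch forced by a single condition. That is false: take the name $\dot f$ read off the Cohen real $c$ by $\dom(\dot f):=\{n : c(2n)=1\}$ and $\dot f(n):=c(2n+1)$ for $n\in\dom(\dot f)$ (generically both $\dom(\dot f)$ and its complement are infinite, so this names a condition in $\silver_\infinite$). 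For any condition $r$ and any $n$ beyond its support, $r$ has one extension forcing $n\in\dom(\dot f)\wedge\dot f(n)=0$ and another forcing $n\in\dom(\dot f)\wedge\dot f(n)=1$; hence no ground-model value $x(n)$ can be forced by $r$ to agree with $\dot f(n)$ whenever $n\in\dom(\dot f)$, and no condition forces any ground-model real into $[\dot T]$. The appeal to the ccc is empty here (Cohen forcing is countable), and the $\omega$-branching at splitting levels does nothing about the infinitely many levels in $\dom(\dot f)$ at which $r$ would have to be strengthened. Any correct proof along your lines must feed the Cohen-branches hypothesis into the recursion itself, and your sketch gives no indication of how.

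For comparison: the paper gives no proof of this proposition (it is quoted from \cite{Brendle2}, where it is proved for Miller trees and inherited by $\silver_\infinite$), but it also follows in two lines from Lemma \ref{comeager-silver-inf}, which the paper does prove. That lemma exhibits an explicitly defined comeager set $C=\bigcap_{n}C_n$, with each $C_n$ open dense and coded in the ground model, such that no $N_f\in\silver_\infinite$ is contained in $C$. Reinterpreting the same Borel definition in $V[c]$: every real Cohen over $V$ meets every ground-model-coded dense open set and hence lies in $C$, while any $N_f\in\silver_\infinite$ of the extension contains a point outside $C$, which is therefore not Cohen over $V$. This is precisely the failure of the analogue of Lemma \ref{silver-cohen-2} that the paper is highlighting, and it is the cleanest substitute for the argument you attempted.
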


(Brendle actually proves this result  for the Miller forcing and so in particular holds for $\silver_\infty$.)

But $\poset{FM}$ and $\silver_\infty$ are different under another point of view, which is particularly significant in our framework of social choice theory. In fact, while for $\poset{FM}$ an analog of Lemma \ref{silver-cohen-2} holds, for $\silver_\infty$ it fails.

\begin{lemma} \label{comeager-silver-inf}
There exists a comeager set $C \subseteq \omega^\omega$ such that for every $N_f \in \silver_\infinite$, $N_f \nsubseteq C$.
\end{lemma}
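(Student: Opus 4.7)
The plan is to exhibit an explicit dense $G_\delta$ set $C \subseteq \omega^\omega$ that no condition of $\silver_\infty$ can be a subset of. I would take
\[
D_n := \big\{x \in \omega^\omega : \exists m \geq n,\ x(m) = \textstyle\sum_{k<m} x(k)\big\}, \qquad C := \bigcap_{n \in \omega} D_n.
\]
Each $D_n$ is open as a countable union of clopens, and it is dense: given any basic open $[s]$, extend $s$ to a finite $t$ of length $L := \max(|s|, n) + 1$ by padding with zeros and then setting $t(L-1) := \sum_{k<L-1} t(k)$, which forces $[t] \subseteq D_n$. Hence $C$ is comeager.

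The heart of the argument is, for an arbitrary $N_f \in \silver_\infty$ with $\dom(f)^c = \{a_0 < a_1 < \dots\}$, to construct some $x \in N_f \setminus C$ by a recursion on the free coordinates. Keeping the running sum $X_i := \sum_{k<a_i} x(k)$, at stage $i$ I would pick $x(a_i) \in \omega$ outside the finite set consisting of $X_i$ itself (to guarantee $x(a_i) \neq \sum_{k<a_i} x(k)$) together with the unique value that would make $f(m) = \sum_{k<m} x(k)$, for each $m \in \dom(f) \cap (a_i, a_{i+1})$ --- noting that the coordinates strictly between $a_i$ and any such $m$ are already forced to be $f$-values, so at this stage only $x(a_i)$ is still free to vary. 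Because $\dom(f) \cap (a_i, a_{i+1})$ is finite and $\omega$ is infinite, a legal choice always exists. The resulting $x$ lies in $N_f$ and satisfies $x(m) \neq \sum_{k<m} x(k)$ for every $m \geq a_0$, so $x \notin D_{a_0}$ and therefore $x \notin C$.

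The only slightly delicate point is the bookkeeping of the recursion: one has to check at each stage $i$ that the forbidden values really depend only on coordinates that have already been determined, so that the induction step is well defined. Conceptually, the reason this works in $\omega^\omega$ but fails in $2^\omega$ (cf.\ Lemma \ref{silver-cohen-2}) is precisely that free coordinates of a Silver condition here take every natural value, which lets one dodge any finite obstruction at each step --- the very feature responsible for the breakdown of the uniform fusion used in the proof of Lemma \ref{silver-cohen-2}.
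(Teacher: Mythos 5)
Your proof is correct, and it takes a genuinely different route from the paper's. The paper builds its comeager set from open dense sets $C_n$ whose basic clopen pieces force long blocks of zeros after each initial segment, with the block lengths tied to the values already seen; escaping from inside a condition $N_f\in\silver_\infinite$ then requires choosing the free coordinates so large that the mandated zero-blocks collide with the frozen part of $f$, and the bookkeeping of those collisions is the delicate (and, as written, somewhat rough) part of the argument. You instead use a single arithmetic trap --- ``some coordinate $m\geq n$ equals the sum of all earlier coordinates'' --- which is visibly open dense, and escape from it by a plain finite-obstruction-avoidance recursion: at each free coordinate $a_i$ the equality $x(m)=\sum_{k<m}x(k)$, for $m=a_i$ or for $m\in\dom(f)\cap(a_i,a_{i+1})$, rules out at most one candidate value of $x(a_i)$ apiece, and $\omega$ leaves room to dodge any finite set. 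Your bookkeeping check is the right one and it goes through: for $m\in\dom(f)\cap(a_i,a_{i+1})$ the sum $\sum_{k<m}x(k)$ is $x(a_i)$ plus a constant already determined by $f$ and by stages $j<i$, so the forbidden set at stage $i$ really is finite and depends only on prior data; and every $m\geq a_0$ is either some $a_j$ or lies in some interval $(a_i,a_{i+1})$ inside $\dom(f)$, so the final $x$ avoids $D_{a_0}\supseteq C$. What your version buys is a shorter and more transparent argument that isolates exactly the relevant feature (infinitely many choices at each free coordinate, which is also why the analogue fails for $2^\omega$ by Lemma \ref{silver-cohen-2}); what the paper's version buys is a template that it then reuses, almost verbatim, for the $\silver_\infinite$-open-dense construction in Proposition \ref{non-silver-inf}, where your arithmetic gadget would not transfer as directly.
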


\begin{proof}
Let $e_n$ denote the constantly equal 0 sequence of length $n$, i.e. $|e_n|=n$ and $\forall i < n (e_n(i)=0)$. We define a $\subseteq$-decreasing sequence $\{ C_n: n \in \N \}$ of open dense sets as follows:
\begin{itemize}
\item let $h_0:\N^{<\N} \rightarrow \N^{<\N}$ be such that, for every $s \in \N^{<\N}$, $j \in \N$, let $h_0(s^\conc \langle j \rangle):= {s}^\conc {e_{s(0)}}^\conc \dots^\conc {e_{s(|s|-1)}}^\conc  {e_j}$ (note that for $s = \langle \rangle$ we have $h_0(\langle j \rangle)=e_j$). Then put 
\[
C_0:= \bigcup_{s \in \N^{<\N}} N_{h_0(s)}.
\]
\item let $h_{n+1}:\N^{<\N} \rightarrow \N^{<\N}$ be such that, for every $s \in \N^{<\N}$, $j \in \N$, let $h_{n+1}(s^\conc \langle j \rangle):={s}^\conc {e_{s(0)}}^\conc \dots^\conc {e_{s(|s|-1)}}^\conc {e_j}^\conc {e_{n+1}}$, and then put 
\[
C_{n+1}:= \bigcup_{s \in \N^{<\N}} N_{h_{n+1}(s)}.
\]
\end{itemize} 
Note that for every $s \in \N^{<\N}$, $h_{n+1}(s) \supseteq h_{n}(s)$, and so $C_{n+1} \subseteq C_n$.

By construction, each $C_n$ is open dense. Put $C := \bigcap_{n \in \N} C_n$. We aim to show:
\begin{equation} \label{eq4}
\forall N_f \in \silver_\infinite \exists x \in N_f \setminus C,
\end{equation}
and so we have to show that for every $N_f \in \silver_\infinite$ there exist $x \in \omega^\omega$ and $n \in \N$ such that $x \in N_f$ and $x \notin C_n$. So fix $N_f \in \silver_\infinite$ arbitrarily and let $T_f$ be the corresponding tree such that $[T_f]=N_f$, and let $\{ a_j:j \in \omega \}$ enumerate all elements in $\dom(f)^c$. 
Let $t_0:= f \restric [0,a_0) = \stem(T_f)$ and pick $n > a_0+1$; fix the notation $E_m:=\bigcup_{s \in \N^{< m}} h_n(s)$. For every $s \in \omega^{<a_0}$ and $j \in \omega$ we get 
\[
h_n(s^\conc \langle j \rangle) \supseteq s^\conc e_n,
\]
and since, by the choice of $n$, for every $s \in \omega^{<a_0}$ and $k >0$ one has $s^\conc e_n \perp {t_0}^\conc \langle k \rangle$ we therefore get $E_{a_0} \cap N_{t_0^\conc \langle k \rangle}=\emptyset$.

Then take $j_1 > a_1 +1$ and put $t_1:= {t_0}^\conc \langle j_1 \rangle^\conc f \restric (a_0,a_1)$; in particular, note that $E_{k_0} \cap N_{t_1}=\emptyset$. By definition $h_n({t_0}^\conc \langle j_1 \rangle^\conc {\langle f(a_0+1) \rangle}) \supseteq {t_0} ^\conc {\langle j_1 \rangle}^\conc {e_{j_1}}^\conc {e_n}$, and similarly for every $ s \subseteq  f \restric (a_0,a_1)$, we have
\[
h_n({t_0}^\conc \langle j_1 \rangle^\conc {s}) \supseteq {t_0} ^\conc {\langle j_1 \rangle}^\conc {s \restric_{|s|-1}}^\conc {e_{s(|s|-1)}}^\conc {e_{j_1}}^\conc {e_n}
\]

Hence, since $|e_{j_1}|=j_1 > a_1 +1> |f\restric (a_0,a_1)|$, it follows, by the same argument as above, for every $k >0$, $E_{a_1} \cap N_{t_1^\conc k}=\emptyset$.

Then  we can proceed following the same procedure and recursively build the sequences $\{t_m: m \in \N  \}$ such that $t_m \subseteq t_{m+1}$ and for every $m \in \omega$, $N_{t_{m+1}} \cap E_{a_m}= \emptyset$. Finally put $x := \bigcup_{m \in \N} t_m$. By construction each $t_m \in T_f$ and so $x \in [T_f]=N_f$. We claim 
$$x \notin \bigcup_{m \in \N} E_{a_m}.$$
Indeed, assume there is $l \in \omega$ such that $x \in E_{a_l}$; then we would have $\bigcap_{m \in \omega} N_{t_m}=x \in E_{a_l}$, contradicting the fact that $N_{t_{l+1}} \cap E_{a_l} = \emptyset$.

The proof is then complete, as $ \bigcup_{m \in \N} E_{a_m}=C_n$. 
\end{proof}

\begin{proposition} \label{non-silver-inf}
There exists $F \subseteq \N^{\N}$ such that for every $N_f \in \silver_\infinite$ one has $N_f \cap F \neq \emptyset$  and $N_f \nsubseteq F$.  
\end{proposition}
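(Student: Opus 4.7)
The plan is a Bernstein-style transfinite construction of length $2^{\aleph_0}$, using AC to diagonalize over all Silver-infinity conditions.

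First, two cardinality facts. For any $N_f \in \silver_\infinite$, the set $\dom(f)^c$ is infinite and the coordinates in $\dom(f)^c$ can be assigned arbitrarily in $\N$, so $|N_f| = \N^{\aleph_0} = 2^{\aleph_0}$. On the other hand, each condition of $\silver_\infinite$ is coded by a partial function $\N \to \N$, hence $|\silver_\infinite| \leq 2^{\aleph_0}$.

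Using AC, I would enumerate $\silver_\infinite = \{N_{f_\alpha} : \alpha < 2^{\aleph_0}\}$ and recursively pick, at each stage $\alpha < 2^{\aleph_0}$, two distinct points
\[
x_\alpha, y_\alpha \in N_{f_\alpha} \setminus \{x_\beta, y_\beta : \beta < \alpha\}.
\]
This is possible because the removed set has cardinality at most $2|\alpha| < 2^{\aleph_0}$, whereas $|N_{f_\alpha}| = 2^{\aleph_0}$. Setting $F := \{x_\alpha : \alpha < 2^{\aleph_0}\}$, one has $x_\alpha \in N_{f_\alpha} \cap F$, so $N_{f_\alpha} \cap F \neq \emptyset$. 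Moreover $y_\alpha \in N_{f_\alpha}$ and $y_\alpha \notin F$: for $\beta \leq \alpha$ we have $y_\alpha \neq x_\beta$ because $y_\alpha$ was chosen outside $\{x_\gamma : \gamma < \alpha\} \cup \{x_\alpha\}$, and for $\beta > \alpha$ we have $x_\beta \neq y_\alpha$ because $x_\beta$ was chosen outside $\{y_\gamma : \gamma < \beta\}$, which contains $y_\alpha$. Hence $N_{f_\alpha} \not\subseteq F$.

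I do not foresee any real obstacle beyond the cardinality count for $N_f$, which is immediate from the definition of $\silver_\infinite$. This is the standard Bernstein-type argument adapted to $\silver_\infinite$; note that in light of Lemma \ref{comeager-silver-inf} and Proposition \ref{cohen-silver-inf}, the Cohen-forcing style constructions used in Section \ref{s4} are unavailable in this setting, and a direct diagonalization via AC seems the natural route.
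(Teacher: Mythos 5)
Your Bernstein-type argument is a correct proof of the statement as literally written: the cardinality counts are right ($|N_f|=\aleph_0^{\aleph_0}=2^{\aleph_0}$ because the infinitely many coordinates in $\dom(f)^c$ range freely over $\N$, and $|\silver_\infinite|\le 2^{\aleph_0}$ since conditions are coded by partial functions), and the bookkeeping with the $x_\alpha$'s and $y_\alpha$'s is the standard, sound diagonalization. However, it takes a genuinely different route from the paper, and one that loses the point of the proposition in context. The paper's proof is an explicit, choice-free construction modelled on Lemma \ref{comeager-silver-inf}: one defines, without any appeal to a well-ordering of the continuum, $\silver_\infinite$-open dense sets $F_n$ (obtained by padding each condition with blocks $e_j$ of zeros via the maps $G_n$), puts $F:=\bigcap_{n}F_n$, and then exhibits concretely inside an arbitrary $N_f$ one branch lying in every $F_n$ and another branch escaping some $F_n$. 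The contrast driving Section \ref{s5} is precisely that for two alternatives a non-$\silver$ set requires ``a standard application of AC'' (i.e.\ exactly your Bernstein argument), and Proposition \ref{cohen-uppersilver} shows that this use of choice is unavoidable there; for $\silver_\infinite$ the witnessing $F$ exists provably in ZF, which is why no analogue of the Section \ref{s4} consistency results can hold for infinitely many alternatives. Your proof establishes existence under AC but applies verbatim to plain Silver conditions as well, so it cannot distinguish the infinite-alternative case from the binary one and would reduce the proposition to the trivial observation already made in Section \ref{s3}. What your approach buys is brevity and generality (any size-continuum family of size-continuum sets admits a Bernstein set); what it gives up is the definability and ZF-provability content that is the actual reason the proposition is stated.
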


If we identify $F$ with its characteristic function, Proposition \ref{non-silver-inf} asserts that:
\begin{equation} \label{eq5}
\forall N_f \in \silver_\infinite (\exists x_0 (F(x_0)=0) \land \exists x_1 (F(x_1)=1). 
\end{equation}

Note that this is not enough to get a social choice function for infinite alternatives which is not $\Finite^+$-anti-democratic, as the $F$ is \ref{eq5} only range into $\{  0,1\}$ and not $\N$. However, we conjecture that Proposition \ref{non-silver-inf} might be improved for $F: \N^{\N} \rightarrow \N$. 

\begin{proof}[Proof of Proposition \ref{non-silver-inf}]
The construction is very similar to that one in the proof of Lemma \ref{comeager-silver-inf}. The difference here is that we build the $h_n$'s in order to work as maps from $\silver_\infinite \rightarrow \silver_\infinite$ instead of maps from $\N^{<\N}$ to $\N^{<\N}$. For ease of notation we write $f \in \silver_\infty$ since the condition $N_f$ is completely determined by its associated partial function $f$. We also use the following notation: given a partial function $f: \N \rightarrow \N$ with co-infinite domain, let $\{ a_j:j \in \omega \}$ enumerate all elements in $\dom(f)^c$, and $t \in \N^{<\N}$ we define the partial function $f \oplus t \in \silver_\infinite$ as follows:
\begin{equation}
(f \oplus t )(n)=
\begin{cases}
f(n) \quad &\text{if}\quad n \in \dom(f) \\ 
t(j) \quad &\text{if}\quad  n=a_j \land j < |t|.
\end{cases}
\end{equation}
Clearly $N_{f \oplus t} \leq N_f$. 
Moreover, given $\{ t_i: i \leq N \}$, with natural number $N>0$, we also recursively define $f \bigoplus_{i \leq N} t_j:= (f \bigoplus_{i<N} t_i) \oplus t_N$.
We then define $G_n$'s recursively as follows:  
\begin{itemize}
\item let $G_0: \silver_\infinite  \rightarrow \silver_\infinite$ be such that, for every $f \in \silver_\infinite$, with $\dom(f)^c:=\{ a^f_j:j \in \N \}$, let 
\[
G_0(f \cup \{ a^f_0,j \}):=  (f \bigoplus_{i < a^f_0} e_{f(i)}) \oplus e_j. 
\] 
(Note that for if $a^f_0=0$ we get $G_0(f \cup \{ a^f_0,j \}):=f \oplus e_j$; and if $f = \langle \rangle$ we get $G_0(f \cup \{ a^f_0,j \}):=e_j$). Then put 
\[
F_0:= \bigcup \{ [G_0(f \cup \{ a^f_0,j \})]: f \in \silver_\infinite \land j \in \N  \}.
\]
Note that $F_0$ is defined in such a way that it is $\silver_\infinite$-open dense, which means
\[
\forall N_f \in \silver_\infinite \exists N_g \leq N_f (N_g \subseteq F_0).
\]
\item for $n >0$, let $G_n: \silver_\infinite  \rightarrow \silver_\infinite$ be such that, for every $f \in \silver_\infinite$, with $\dom(f)^c:=\{ a^f_j:j \in \N \}$, let 
\[
G_n(f \cup \{ a^f_0,j \}):=  (f \bigoplus_{i < a^f_0} e_{f(i)}) \oplus e_j \oplus e_n. 
\] 
Then put 
\[
F_n:= \bigcup \{ [G_n(f \cup \{ a^f_0,j \})]: f \in \silver_\infinite \land j \in \N  \}.
\]
Note that $F_n$ is $\silver_\infinite$-open dense, and $F_{n+1} \subseteq F_n$.
\end{itemize}
Finally put $F:= \bigcap_{n \in \N} F_n$. We claim that $F$ has the desired property. 

Fix arbitrarily $N_f \in \silver_\infinite$. 
We can build a sequence $\{ t_m: m \in \N \}$ of nodes in $N_f$ following the same procedure as in Lemma \ref{comeager-silver-inf}, pick $n > a^f_0 + 1$ and then define $y:= \bigcup_{m \in \N} t_m$ such that $y \in N_f$. In this case, by construction we have $t_{m+1} \notin A_m$, where $A_m$ is defined as
\[
A_m:= \bigcup \{ [G_n(N_g)]: N_g \in \silver_\infinite \land a^g_0 < a^f_m \}.
\]
Since $F_n = \bigcup_{m \in \N} A_m$ we the get $y \notin F_n$. This complete the proof that $N_f \not \subseteq F$. 

Showing that $N_f \cap F \neq \emptyset$ is even simpler, as it is sufficient to define the following:
\begin{equation}
x(n):=
\begin{cases}
f(n) \quad &\text{if}\quad n \in \dom(f) \\ 
0 \quad &\text{if}\quad  \exists j \in \omega (n=a^f_j).
\end{cases}
\end{equation}
It is clear that $x \in N_f$. To check that $x \in F$, it is enough to note that 
\[
x \in \bigcap_{n \in \omega} [G_n (f \cup \{ a^f_0,0 \})],
\]  
and since $ [G_n (f \cup \{ a^f_0,0 \})] \subseteq F_n$, we obtain $x \in F$.

\end{proof}

\section{Equity and Pareto principles vs anti-democratic social choice functions} \label{s6}

A well-studied field in economic theory consists of social welfare relations on infinite utility streams. 

We consider a \emph{set of utility levels} $Y$ (or \emph{utility domain}) totally ordered, and we call $X:= Y^\omega$ the corresponding \emph{space of infinite utility streams}. In some setting $Y$ is usually also endowed with some topology and $X$ with the induced product topology, but in our context we do not need any topological setting.
Given $x,y \in X$ we write $x \leq y$ iff $\forall n \in \omega (x(n) \leq y(n))$, and $x < y$ iff $x \leq y \land \exists n \in \omega (x(n) < y(n))$. Furthermore we set $\ideal{F}:= \{ \pi: \omega \rightarrow \omega: \text{ finite permutation} \}$, and we define, for $x \in X$, $f_\pi(x):= \langle x({\pi(n)}): n \in \omega \rangle$.

We say that a subset $\preceq$ of $X \times X$ is a \emph{social welfare relation} (SWR) on $X$ iff $\preceq$ is reflexive and transitive, and let $\sim$ denote the equivalence relation $x \sim y \ifif x \preceq y \land  y \preceq x$. If $x \preceq y$ and $x \not \sim y$ we write $x \prec y$.

In economic theory some equity and efficiency principles are well-studied; we focus on three of the most popular ones.

\begin{definition} \label{def-FA}
A social welfare relation $\preceq$ is called \emph{finitely anonymous} (FA) (or satisfying finite anonymity)  iff for every $\pi \in \ideal{F}$ we have $f_\pi(x) \sim x$. 
\end{definition}

\begin{definition}
A social welfare relation satisfies \emph{strong equity} (SE) iff for every $x,y \in X$ such that there exist $i,j \in \omega$, $i \neq j$, such that $x(i) < y(i) < y(j) < x(j)$ and for all $k \neq i,j$, $x(k)=y(k)$, then $x \prec y$.
\end{definition}

\begin{definition} \label{def-pareto}
A social welfare relation $\preceq$ is called \emph{Paretian} (P) (or satisfying Pareto principle) iff 
$\forall x,y \in X (x < y \Rightarrow x \prec y)$.
\end{definition}

The first two are considered equity principles: FA essentially means that when we interchange the positions of two (or finitely many) individuals the SWR does not change the ranking; SE can be understood as a principle saying that if we consider an income distribution $x$ and we increase the income of a poor person $i$ and decrease the income of a rich person $j$ (and leave the income of the others as they are) then we get $y$ which is better-off than $x$; P is understood as an efficiency principle, in fact if a stream $y$ is everywhere larger than another stream $x$ and it is strictly larger for at least one individuals, than $y$ is strictly preferred than $x$.

In recent years total SWR satisfying combinations of these principles were proven to have a non-constructive nature (\cite{Zame}, \cite{Lauwers1}, \cite{Dubey}, \cite{Laguzzi}, \cite{DLR}). We want to show that some combinations of these principles are in a sense able to provide social welfare functions which are not $D_{\delta}^+$-anti-democratic, for $\delta \geq \frac{2}{3}$ where we recall
\[
D_{\delta}:= \{ x \in 2^{\omega}: \overline{d}(x) \leq \delta  \}.
\]

In the following proposition, keeping in mind the interpretation of SE given above, we could think of the utility levels $a < b < c < d$ as: $a:=$ poor class, $b=$ lower-middle class, $c=$ upper-middle class, $d=$ rich class.
\begin{proposition}
Let $\preceq$ denote a SWR satisfying SE and FA on $X= Y^{\omega}$, where $Y:=\{ a,b,c,d  \}$ with $a < b < c <d$.
Then there exists a social choice function $F$ which is not $D_\delta^+$-anti-democratic, for any $\delta \in (\frac{2}{3},1]$.
\end{proposition}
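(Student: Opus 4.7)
The plan is to build $F$ explicitly from $\preceq$, using the SE property inside blocks of size $3$ so that modifying $x$ on a fully free block induces an SE-comparable change in an encoded utility stream.

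First, partition $\N$ into consecutive triples $B_n := \{3n, 3n+1, 3n+2\}$, and define an encoding $\phi \colon 2^{\N} \to Y^{\N}$ block by block by putting $\phi(x) \restric B_n = (a, b, d)$ whenever $x \restric B_n = (0,0,0)$ and $\phi(x) \restric B_n = (b, b, c)$ otherwise. The two triples $(a,b,d)$ and $(b,b,c)$ agree at position $3n+1$ and differ at positions $3n, 3n+2$, where the pair $(a,d)$ is replaced by $(b,c)$; since $a < b < c < d$, this is exactly an SE move with the middle coordinate fixed, so $(a,b,d) \prec (b,b,c)$ whenever the surrounding coordinates match. Fix a reference $y_0 \in 2^{\N}$ and define $F(x) = 1$ iff $\phi(x) \succeq \phi(y_0)$, and $F(x) = 0$ otherwise.

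To verify that $F$ is not $D_\delta^+$-anti-democratic for $\delta \in (2/3, 1]$, fix $b \subseteq \N$ with $\overline d(b) > \delta$ and any partial $f$ with $\dom(f) = b^c$. The key combinatorial lemma is that infinitely many blocks $B_n$ are fully free, meaning $B_n \subseteq \dom(f)^c$. If only finitely many blocks were fully free, then for all sufficiently large $N$ each block $B_n$ with $n \leq N$ would contribute at most two free coordinates to $[0, 3N)$, yielding $|\dom(f)^c \cap [0, 3N)| \leq 2N + O(1)$ and hence $\overline d(\dom(f)^c) \leq 2/3$, a contradiction. Having fixed such $n$, construct $y, z \in N_f$ that agree on $\N \setminus B_n$ but differ on $B_n$, with $y \restric B_n = (0,0,0)$ and $z \restric B_n$ equal to any other pattern; then $\phi(y)$ and $\phi(z)$ differ by a single SE move inside $B_n$, so $\phi(y) \prec \phi(z)$.

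The main obstacle is the choice of the reference $y_0$: to conclude $F(y) \neq F(z)$ one needs $\phi(y_0)$ to lie strictly between $\phi(y)$ and $\phi(z)$ in $\preceq$. The plan is to let $y_0$ itself encode a block-pattern whose image $\phi(y_0)$ is a ``calibrated'' stream, obtained from the all-zero encoding by toggling $(a,b,d) \to (b,b,c)$ on a set of blocks of intermediate density; exploiting the infinite reservoir of fully free blocks from the density argument, one iterates the block-toggling construction over finitely many of them to assemble a finite SE chain that crosses the $\phi(y_0)$-threshold, producing the required witnesses on opposite sides. This combination of strict SE inequalities afforded by $\preceq$ with the abundance of fully free blocks (guaranteed precisely by the $2/3$ density threshold) delivers $F(y) \neq F(z)$, so $b$ is not $F$-irrelevant and $F$ is not $D_\delta^+$-anti-democratic.
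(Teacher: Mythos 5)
Your encoding and your counting argument are fine: the claim that only finitely many aligned triples $B_n\subseteq\dom(f)^c$ would force $\overline{d}(\dom(f)^c)\leq \frac{2}{3}$ is correct, and it is a clean variant of the paper's claim that any set of upper density $>\frac{2}{3}$ contains infinitely many consecutive triples. The genuine gap sits exactly where you flag ``the main obstacle'': the definition $F(x)=1 \Leftrightarrow \phi(x)\succeq\phi(y_0)$ cannot be made to work, because the proposition does not assume that $\preceq$ is total. The axioms SE and FA only force comparabilities between streams that differ in finitely many coordinates (a finite permutation or a single SE move alters finitely many entries, hence so does any finite chain of them). So take $\preceq$ to be the smallest preorder generated by FA and SE: then $\phi(y)\succeq\phi(y_0)$ can hold only when $\phi(y)$ and $\phi(y_0)$ agree on all but finitely many coordinates. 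Choosing $N_f$ whose fixed part forces $\phi(y)$ to disagree with $\phi(y_0)$ on infinitely many blocks (e.g.\ $f$ puts a $1$ in infinitely many blocks where $y_0$ is $(0,0,0)$) makes $F$ identically $0$ on $N_f$, so $\dom(f)^c$ is $F$-irrelevant and your $F$ is $D_\delta^+$-anti-democratic after all. No ``calibrated'' choice of $y_0$ and no finite SE chain inside $N_f$ can repair this: the chain need never contain, nor cross, anything comparable to $\phi(y_0)$.

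The paper circumvents this with a self-referential comparison rather than an external threshold: from each $x$ it manufactures \emph{two} streams $o(x),e(x)$ (built from interleaved blocks of lengths determined by $x$) and sets $F:=\{x: e(x)\prec o(x)\}$. Modifying $x$ on free coordinates shifts the roles of the two streams on a tail, and FA together with SE then produce the strict comparisons needed to flip membership in $F$, via a case distinction on whether $e(x)\prec o(x)$, $o(x)\prec e(x)$, or $e(x)\sim o(x)$. If you want to salvage your (nicer) aligned-block encoding, the fix is to replace the reference stream $\phi(y_0)$ by a comparison between two streams both read off from $x$, so that every comparison you need is directly supplied by FA and SE applied to finitely many coordinates.
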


\begin{proof}
We use the following notation. Given $f : \omega \rightarrow \{ 0,1 \}$ partial function, let $U(f):= \{ n \in \N: f(n)=1 \}$. Given $x \in 2^{\omega}$ let $\{ n_k: k \in \N \}$ enumerate $U(x)$,  $I_{k}:= [2 n_k, 2 n_{k+1})$ and
\[
O(x):= \bigcup_{k \in \odd} I_k  \quad \text{ and } \quad E(x):= \bigcup_{k \in \even} I_k.
\]
Then define $o(x), e(x) \in Y^\omega$ as follows
\begin{equation}\label{E1}
o(x)(n):=  
\begin{cases}
a		&\text{ if $n \in [0,n_0)\cap \even$}\\
d		&\text{ if $n \in [0,n_0)\cap \odd$}  \\
b		&\text{ if $n \in E(x)\cap \even$}\\
c		&\text{ if $n \in E(x)\cap \odd$}  \\
a		&\text{ if $n \in O(x)\cap \even$}  \\
d		&\text{ if $n \in O(x)\cap \odd$}\\
\end{cases}
\end{equation}
\begin{equation}\label{E2}
e(x)(n):=  
\begin{cases}
a		&\text{ if $n \in [0,n_0)\cap \even$}\\
d		&\text{ if $n \in [0,n_0)\cap \odd$}  \\
a		&\text{ if $n \in E(x)\cap \even$}\\
d		&\text{ if $n \in E(x)\cap \odd$}  \\
b		&\text{ if $n \in O(x)\cap \even$}  \\
c		&\text{ if $n \in O(x)\cap \odd$}  \\
\end{cases}
\end{equation}
Let $\preceq$ be a SWR satisfying SE and FA, and put 
\[
F:= \left\{ x \in 2^\omega: e(x) \prec o(x) \right\}.
\] 
We aim to show $F$ is a non-$\silver_{\delta}$-set. In particular, this means $F$ is the social choice function we were looking for, as a non-$\silver_\delta$-set is not $D_\delta^+$-anti-democratic, for any fixed $\delta \in (\frac{2}{3},1]$.

Given any $N_f \in \silver_\delta$,  let $\{n_k: k \in\N \}$ enumerate all natural numbers in $\dom(f)^c \cup U(f)$. 
Our goal is to find $x, z \in N_f$ such that $x \in F \ifif z \notin F$. 
First we observe an easy combinatorial structure of conditions in $\silver_{\delta}$.

\begin{claim}
Let $A \subseteq \omega$ be such that $\overline{d}(A)> \frac{2}{3}$. Then there are infinitely $w_j$ such that for every $j < \omega$, one has $w_j \subseteq A$, $|w_j|=3$ and there exist $h_j \in \omega$ so that $w_j=\{ h_j,h_j+1, h_j+2 \}$.  
\end{claim}

The proof of the claim is an easy counting-argument. Indeed, note that $\overline{d}(A) > \frac{2}{3}$ there are $\{ k_i: i < \omega \}$ such that for every $i < \omega$, $\frac{|A \cap {k_i}|}{{k_i}} > \frac{2}{3} + \varepsilon$, for some $\varepsilon> 0$. Then it is easy to see that any block $A \cap [{k_i},{k_{i+1}})$ should contain a triple as in the claim. 

\vspace{2mm}

We pick $x \in N_f$ such that for all $n_k \in \dom(f)^c \cup U(f)$, $x(n_k)=1$. 
Let $\left\{w_j: j \in \N \right\}$ list all triples as in the claim, applied with $A=\dom(f)^c$. 

We have to consider three cases.

\begin{itemize}
\item $e(x) \prec o(x)$: 
Note $x \in F$. 
Pick $n_{l} \in w_0$ such that $l \in \even$ and define $y \in 2^\omega$ as follows: 
\[
y(n) = \Big \{ 
\begin{array}{ll}
x(n) & \text{if $n \neq n_{l}$}\\
0 &  \text{if $n = n_l$}.
\end{array}
\]
Roughly speaking, $y$ is obtained by dropping $n_l$ from $x$.
Note we have
\begin{itemize}
\item[] $\forall n \geq n_l, o(y)(n)=e(x)(n)$
\item[] $\forall n \geq n_l, e(y)(n)=o(x)(n)$
\end{itemize}
Let
\[
O(l):= \bigcup_{k<l, k \in \odd} I_k \quad \text{ and } \quad E(l):= \bigcup_{k <l, k \in \even} I_k.
\]

\vspace{2mm}

Pick $J \in \omega$ large enough, $n_j,n_j+1 \in \dom(f)^c$ (for every $j \in J$) such that $j \in \odd$ and $2 \cdot J > |E(l)|+2$; note that $n_j+1 = n_{j+1}$ since we choose $n_j,n_{j}+1 \in w_{i_j}$, for some $i_j \in \omega$. Then define 
\[
z(n) = \Big \{ 
\begin{array}{ll}
y(n) & \text{if $n \notin \{ n_{j}, n_{j}+1: j<J \}$}\\
0 &  \text{otherwise}.
\end{array}
\] 
Roughly speaking, $z$ is obtained by dropping the $n_j$'s and $n_j+1$'s from $y$.

Note that by definition for every $j \in J$, $o(z)(2n_j)=a$ and $o(z)(2n_j+1)=d$.  Now let $\{ e_j: j < J' \}$ and $\{ o_j: j < J'\}$ be increasing enumerations, respectively, of all even numbers and odd numbers in $E(l)$; note that by choice of $J$ we have $J' < J$. Then we have the following.
\begin{itemize}
\item For all $j < J'$, one has $o(z)(e_j)=b=e(x)(2n_j)$ and $o(z)(o_j)=c=e(x)(2n_j+1)$, while  $e(x)(e_j)=a=o(z)(2n_j)$ and $e(x)(o_j)=d=o(z)(2n_j+1)$ . Now pick $\pi \in \mathcal{F}$ such that for every $j < J'$, $\pi(e_j)=2n_j$ and $\pi(o_j)=2n_j+1$; 
note that there exists $j \in J \setminus J'$ such that $2n_j \notin \dom(\pi)$ and $o(z)(2n_j)= a < e(x)(2n_j)=b < e(x)(2n_j+1)=c < o(z)(2n_j+1)=d$;
hence $o(z) \sim f_\pi(o(z)) \prec e(x)$.
\item Analogously, for all $j < J'$, one has $e(z)(e_j)=a=o(x)(2n_j)$ and $e(z)(o_j)=d=o(x)(2n_j+1)$, while $o(x)(e_j)=b=e(z)(2n_j)$ and $o(x)(o_j)=c=e(z)(2n_j+1)$. Pick the same $\pi \in \mathcal{F}$ as above, i.e., such that for every $j < J'$, $\pi(e_j)=2n_j$ and $\pi(o_j)=2n_j+1$; 
note that there exists $j \in J \setminus J'$ such that $2n_j \notin \dom(\pi)$ and $o(x)(2n_j)= a < e(z)(2n_j)=b < e(z)(2n_j+1)=c < o(x)(2n_j+1)=d$;
then $o(x) \sim f_\pi(o(x)) \prec e(z)$. 
\end{itemize}
The two points together gives $o(z) \prec e(z)$, i.e. $z \notin F$.
\vspace{2mm}

\item case $o(x) \prec e(x)$: this case is analog to the preceding one, where we just interchange the role of $o(x)$ and $e(x)$ in the construction, in particular we start with $x \notin F$ and we get $z \in F$ and in the construction we have to permute the elements in $O(l)$ instead of $E(l)$.

\vspace{3mm}

\item case $e(x) \sim o(x)$: Note $x \notin F$. Pick one triple $w_j$ and $n_j, n_j+1 \in w_j$ such that $j \in \odd$, and define 
\[
y(n) = \Big \{ 
\begin{array}{ll}
0 & \text{ if $(n=n_{j} \vee n=n_{j}+1)$}\\
x(n) & \text{ else} 
\end{array}
\]
By construction we obtain
\begin{itemize}
\item $\forall n \notin I_j :=\{2n_j,2n_j+1  \}, o(y)(n) = o(x)(n)$, otherwise one has $o(x)(2n_j)=a < o(y)(2n_j)=b < o(y)(2n_j+1)=c < o(x)(2n_j+1)=d$. Hence $o(x) \prec o(y)$.
\item $\forall n \notin I_j, e(y)(n) = e(x)(n)$, otherwise one has $e(y)(2n_j)=a < e(x)(2n_j)=b < e(x)(2n_j+1)=c < e(y)(2n_j+1)=d$. Hence $e(y) \prec e(x)$.
\end{itemize}
Hence $e(y) \prec e(x) \sim o(x) \prec o(y)$, i.e. $y \in F$.

\end{itemize}
\end{proof}

\begin{proposition}
Let $\preceq$ denote a SWR satisfying P and FA on $X= Y^{\omega}$, where $Y:=\{ 0,1  \}$.
Then there exists a social choice function $F$ which is not $D_\delta^+$-anti-democratic, for any $\delta \in (\frac{2}{3},1]$
\end{proposition}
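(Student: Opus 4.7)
The plan is to exploit the Pareto principle to realize any prescribed subset of $2^\omega$ as a social choice function of the form $F = \{x : e(x) \prec o(x)\}$. By the same AC-based argument already invoked in Section~\ref{s4}, I may fix a non-$\silver_\delta$-set $A \subseteq 2^\omega$ for the given $\delta \in (2/3,1]$; the task then reduces to defining explicit $o, e : 2^\omega \to 2^\omega$ so that membership in $A$ is encoded as the direction of the strict $\prec$-comparison between $e(x)$ and $o(x)$. Once this is done, $F = A$ will be non-$\silver_\delta$, whence by the remark made after introducing $\silver^*$ in Section~\ref{s4}, $F$ will not be $D_\delta^+$-anti-democratic.

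Concretely, I intend to assign to each $x$ a small finite support on which $o(x)$ and $e(x)$ differ, with a ``$2$ vs.\ $1$'' imbalance of $1$-coordinates, and to flip the assignment depending on whether $x \in A$. For $x \in A$, let the $1$-set of $o(x)$ be $\{0,2\}$ and that of $e(x)$ be $\{1\}$; for $x \notin A$, swap these roles. To see that $x \in A$ forces $e(x) \prec o(x)$, apply the finite transposition $\pi$ of $\{0,1\}$: the sequence $f_\pi(e(x))$ has its unique $1$ at coordinate $0$, so $f_\pi(e(x)) \leq o(x)$ pointwise with strict inequality at coordinate $2$. Pareto then gives $f_\pi(e(x)) \prec o(x)$, FA gives $e(x) \sim f_\pi(e(x))$, and transitivity of $\preceq$ concludes $e(x) \prec o(x)$. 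The symmetric argument shows that $x \notin A$ yields $o(x) \prec e(x)$, which by antisymmetry of $\prec$ (obtained from transitivity together with reflexivity) implies $e(x) \not\prec o(x)$. Hence $F = A$.

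I do not expect any substantive obstacle: the only non-trivial input is the existence of a non-$\silver_\delta$ set $A$, and that was already handled in Section~\ref{s4} by a standard AC argument. Unlike the preceding proposition, no combinatorial analysis of density-$(>2/3)$ triples inside $\dom(f)^c$ is required, because here we inherit the non-$\silver_\delta$-ness of $F$ directly from $A$; the density threshold $\delta > 2/3$ thus enters the argument only through the existence statement for $A$. This simplification is ultimately due to the Pareto principle being stronger than SE in this two-value setting: a single bit-difference (after a finite transposition) already produces a strict comparison, so the four-level block-shifting machinery used in the preceding proposition is not needed here.
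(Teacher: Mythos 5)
Your argument is formally valid in ZFC, but it proves the wrong theorem: by fixing a non-$\silver_\delta$ set $A$ with AC at the outset, you make the hypothesis on $\preceq$ do no work. Indeed, once you have such an $A$, the set $A$ itself is already a social choice function that is not $D_\delta^+$-anti-democratic, so the detour through $o$, $e$, Pareto and finite anonymity merely re-encodes $A$ as $\{x : e(x)\prec o(x)\}$ without adding anything. The point of this proposition, and of Section \ref{s6} as a whole, is the opposite direction of dependence: the \emph{only} non-constructive input is the SWR, and the social choice function is to be manufactured \emph{from} $\prec$ in ZF. This matters because the paper repeatedly works in choice-free contexts (the AD discussion in Section \ref{s8}, the corollary that under AD every social choice function is $D^+$-anti-democratic, the Solovay-type models of Section \ref{s4}); in such a context your proof yields nothing, since the set $A$ you start from need not exist there.

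The paper's proof therefore runs quite differently. It keeps the block coding of the preceding (SE and FA) proposition, now collapsed to the two utility levels $0,1$ via $I_k=[n_k,n_{k+1})$, sets $F:=\{x : e(x)\prec o(x)\}$, and shows \emph{directly} that $F$ is a non-$\silver_\delta$-set: given $N_f\in\silver_\delta$, one takes the point $x\in N_f$ equal to $1$ on $\dom(f)^c$ and perturbs it on blocks of consecutive elements of $\dom(f)^c$ --- these exist precisely because $\overline{d}(\dom(f)^c)>\frac{2}{3}$ --- so that, after a finite permutation, P and FA force the $\prec$-comparison between $e$ and $o$ to reverse, splitting into the three cases $e(x)\prec o(x)$, $o(x)\prec e(x)$, $e(x)\sim o(x)$. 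That is where the threshold $\delta>\frac{2}{3}$ and the axioms P and FA genuinely enter; in your version they enter nowhere except through the claim that they are not needed. The missing content is thus not a technical step but the combinatorial core: for each $N_f\in\silver_\delta$ you must exhibit two points of $N_f$ on which the $\prec$-comparison of $e$ and $o$ differs, using only P, FA and the density of $\dom(f)^c$.
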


\begin{proof}
The structure is very similar to the one above for SE and FA. The difference is that in this case we define $I_k:=[n_k,n_{k+1})$ and then 
\begin{equation}\label{E3}
o(x)(n):=  
\begin{cases}
0		&\text{ if $n \in [0,n_0)$} \\
1		&\text{ if $n \in E(x)$}\\
0		&\text{ if $n \in O(x)$}  \\
\end{cases}
\end{equation}
\begin{equation}\label{E4}
o(x)(n):=  
\begin{cases}
0		&\text{ if $n \in [0,n_0)$} \\
0		&\text{ if $n \in E(x)$}\\
1		&\text{ if $n \in O(x)$}  \\
\end{cases}
\end{equation}
The rest of the proof, splitting into the three cases, works following the same passages and we leave it to the reader. 
\end{proof}

\begin{remark}
It remains open what happens if one combines SE and P, without FA.
Assume there is a total SWR satisfying SE and P, can we then obtain a social choice function which is not $D^+_\delta$-anti-democratic?
\end{remark}

\section{A word about uncountable populations} \label{s7}

Assume $P=\kappa$, for $\kappa$ uncountable. 
Following the line of the previous section, we should find a version of Silver conditions in $2^\kappa$ with the property that the set of splitting levels be \emph{non-small}. 
We consider the \emph{club filter} $\club$ consisting of supersets of closed and unbounded subsets of $\kappa$.
Let $\mathsf{NS}$ be the non-stationary ideal, i.e. $\mathsf{NS}:= \{ x \in 2^\kappa: x^c \in \club  \}$. 
Then let $\clubsilver_\kappa:= \silver(\ns^*)$ consist of those conditions of the form
\[ 
N_f := \{ x \in 2^{\kappa}: \forall \alpha \in \dom(f) (x(\alpha)=f(\alpha)) \},
\]
with $\dom(f) \in \ns$.

In \cite[Lemma 4.1]{Lag15} it was proven that the club filer does not satisfy the $\clubsilver_\kappa$-property, i.e. for every $N_f \in \clubsilver_\kappa$
\[
N_f \cap \club \neq \emptyset \text{ and } N_f \not \subseteq \club.
\]

Note that following the interpretation of Section 2, this means that if we view $\club$ as a social choice function denoted by $F^\club$, then given any \emph{large} set $a$ of individuals, i.e. $a \in \club$, then $a$ is not $F^\club$-irrelevant. In fact we can always find a distribution of individuals choices in $a$ that generates a branch in $\club$ (i.e. for which the social choice function $F^\club$ takes value 1) and another distribution of individual choices in $a$ that generates a branch not in $\club$  (i.e. for which the social choice function $F^\club$ takes value 0). 

In terms of Definition \ref{democratic} we have then the following.
\begin{corollary} \label{corollary1}
$F^\club$ is not $\mathsf{NS}^*$-anti-democratic. 
\end{corollary}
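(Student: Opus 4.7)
The plan is to unpack the two definitions (anti-democracy and $F$-irrelevance) and then observe that the statement reduces almost verbatim to the quoted \cite[Lemma 4.1]{Lag15}.

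First I would negate the conclusion. By Definition \ref{democratic}, saying $F^\club$ is \emph{not} $\ns^*$-anti-democratic means: for every $b \in \ns^*$ and every partial function $f:\kappa \rightarrow \{0,1\}$ with $\dom(f)=b^c$, the set $b$ fails to be $(F^\club,f)$-irrelevant, i.e.\ we have both $N_f \not\subseteq \club$ and $N_f \cap \club \neq \emptyset$. So the task is to verify these two non-inclusions for an arbitrary such pair $(b,f)$.

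Next I would check that any such $N_f$ is a legitimate $\clubsilver_\kappa$-condition. Indeed, $b \in \ns^*$ says $b^c \in \ns$, and by hypothesis $\dom(f)=b^c$, so $\dom(f)$ is non-stationary. This matches exactly the defining requirement of the forcing $\clubsilver_\kappa$ introduced just before the statement, giving $N_f \in \clubsilver_\kappa$.

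The final step is a direct appeal to the quoted \cite[Lemma 4.1]{Lag15}, which asserts that for every $N_f \in \clubsilver_\kappa$ one has $N_f \cap \club \neq \emptyset$ and $N_f \not\subseteq \club$. Interpreting the first as ``some $x \in N_f$ satisfies $F^\club(x)=1$'' and the second as ``some $x \in N_f$ satisfies $F^\club(x)=0$'', we see that $b$ is not $(F^\club,f)$-irrelevant. Since $(b,f)$ was arbitrary subject to $b \in \ns^*$ and $\dom(f)=b^c$, no such $b$ can witness anti-democracy, and $F^\club$ is not $\ns^*$-anti-democratic. There is no real obstacle here: once the definitions are lined up, the cited lemma does all the work; the only bookkeeping is matching $\dom(f)=b^c \in \ns$ with membership in $\clubsilver_\kappa$.
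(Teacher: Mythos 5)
Your proposal is correct and follows essentially the same route as the paper: the paper's own justification is precisely the discussion preceding the corollary, which observes that any $N_f$ with $\dom(f)=b^c\in\ns$ is a $\clubsilver_\kappa$-condition and then invokes \cite[Lemma 4.1]{Lag15} to rule out both $N_f\subseteq\club$ and $N_f\cap\club=\emptyset$. Your unwinding of Definitions \ref{irrelevant} and \ref{democratic} is the same bookkeeping, just made explicit.
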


Unfortunately, Corollary \ref{corollary1} cannot be improved to obtain social functions which are not $\mathsf{NS}^+$-anti-democratic. Indeed, if we consider $\silver(\ns^+)$ we then get a corresponding notion of $\silver(\ns^+)$-property that can be forced to hold for every set, when $\kappa$ is inaccessible. More precisely, in \cite[Lemma 4.2]{Lag15}, in case $\kappa$ be inaccessible, it is proven that  a $< \kappa$-support iteration of length $\kappa^+$ of $\kappa$-Cohen forcing provides a model-extension where all $\text{On}^\kappa$-definable subsets of $2^\kappa$ satisfy  the $\silver(\ns^+)$-property, and in the inner model  $L(\kappa^\kappa)$ of this forcing extension all sets satisfy the $\silver(\ns^+)$-property. In other words, the existence of a non-$\silver(\ns^+)$-set does not follow from the axiom of ZF without AC, and so it has a non-constructive nature.  

\emph{Question.} If $\kappa$ is not inaccessible, without using AC can we find an $F \subset 2^\kappa$ which is not $\mathsf{NS}^+$-anti-democratic?

\section{Concluding remarks} \label{s8}

The common tread linking the results unfolded in the paper is that principles excluding non-democratic behaviour of social choice functions (e.g. generalizing the Arrowian non-dictatorship) reveals a non-constructive nature independently of the other principles (such as Unanimity and Independence of Irrelevant Alternatives), and are therefore \emph{non-definable}. 

On the set-theoretic side, regularity properties of subsets of $2^{\N}$ are usually considered on the positive side, and on the contrary irregular sets are rather considered as pathological ones. In the framework suggested by social choice theory, this vision is reversed, as social choice functions satisfying notions like Silver-property and dense Silver-property, even if in different forms, tend to exhibit a non-democratic behaviour, since certain non-small coalitions are systematically irrelevant in the final collective choice. 

On the philosophical side, in the introduction we have dealt with the use of infinite populations in the context of economic and social choice theory, taking a position in line with Mihara's.  In this section we conclude with some insights  about the use of the axiom of choice AC and the axiom of determinacy AD. The position we want to keep is somehow in between, with  some caution in the use of AD. On the one side, we agree with Litak that a full and indiscriminate use of AC should be looked at with skepticism, and even rejected. On the other side, we think that a full use of AD can rise more than a suspect as well; indeed Corollary \ref{corollary1} suggests to also consider a certain limitation in using AD, as it essentially asserts that in every model of ZF+AD any social choice function exhibits a rather annoying non-democratic behaviour.

As a consequence, rather than considering AD as an axiom to select the appropriate applications where the use of infinite populations is legitimate, we tend to support the idea of considering as more suitable models for social choice theory on infinite populations also those satisfying certain fragments of AC excluding situation as in Corollary \ref{corollary1}.
As an example of an inappropriate model, one can consider $L(\mathbb{R})^{V[G]}$ in Lemma \ref{cohen-uppersilver}. In fact, in $L(\mathbb{R})^{V[G]}$ all social choice functions are $D_\delta^+$-anti-democratic and simultaneously there is a $\DDelta^1_2$-set non-Lebesgue measurable. Hence, $L(\mathbb{R})^{V[G]}$ might be considered unfit both from the point of view of social choice theory and from the point of view of economic theory as well, the latter since it violates Aumann's statement: ``\emph{Non-measurable sets are extremely ``pathological"; it is unlike that they would occur in the context of an economic model} \cite[pg. 44]{Au64}".

Under our consideration, a suitable model could be one where all sets are Lebesgue measurable (and so satisfying $\neg \nl$, ruling out Aumann's pathological situation) but simultaneously containing social choice functions which exclude anti-democratic features. For example, non-$\uppersilver$-sets could serve for the latter, but it is not clear whether one can have a model satisfying $\nuv \land \neg \nl$. In fact, it is possible that a result like $\nb \Rightarrow \nuv$ hold by replacing $\nb$ with $\nl$. In fact, it is not clear whether a result like in Lemma \ref{silver-cohen-2} can be proven when replacing comeager sets with sets having strictly positive measure.

\end{document}

\begin{lemma}
There exists a Borel social choice function $F:\N^{\N} \rightarrow \N$ not satisfying the $\silver_\infinite$-property.
\end{lemma}

\begin{proof} 
\textbf{[Proof wrong!!! need to be fixed!!!]}
Let $\{ P_j:j \in \N  \}$ be a partition of $\N$ such that $|P_j|=\N$ for every $j \in \N$. Then put
\[
X_{j,n}:= \{ x \in \N^{\N}: \exists k \geq n (x(k) \in P_j) \} \hspace{2mm} \text{ and } \hspace{2mm} X_j:= \bigcap_{n \in \N} X_{j,n}.
\]
Define $F: \N^{\N} \rightarrow \N$ social choice function so that for every $x \in \N^{\N}$
\[
F(x)=j \ifif x \in X_j.
\]
We claim $F$ does not satisfy the $\silver_\infinite$-property. So it is sufficient to show: 
\begin{equation} \label{eq3}
\forall N_f \in \silver_\infinite \forall j \in \N \exists x_j \in N_f (F(x_j)=j).
\end{equation}
So pick $p_j \in P_j$ and let $\{ k_n: n \in \N \}$ enumerate $\dom(f)^c$. Then by definition of $N_f$ we can find $x_j \in N_f$ such that for every $n \in \N$, $x_j(k_n)=p_j \in P_j$. This gives $x_j \in X_j$ and so $F(x_j)=j$, which proves \ref{eq3}. 
\end{proof}

\begin{corollary}
There exists a Borel ($\mathbf{\Pi}^0_2$) social choice function for infinitely many alternatives which is not $\Finite^+$-anti-democratic.  
\end{corollary}

Now consider $D^*:= \{ x \in 2^{\N}: d(x)=1 \}$, i.e., the collection of all sets of natural numbers with density equals 1. Note that $D^*:= \bigcap_{k \in \N} F_k$, where $F_k:= \{ x \in 2^{\N}: \exists n \in \N \text{ s.t. }\frac{|x \cap [0,n]|}{n} > 1 - \frac{1}{2^k}\}$. It is quite easy to check that each $F_k$ is open dense, and so $D^*$ is comeager.  However if we think of these sets in terms of probabilistic measure, one would rather expect such a set to be small, as it is rather unlike to randomly pick a sequence in $2^{\N}$ having full density. Indeed one can easily define $E \subseteq D^*$ that turns out to still be comeager, while with measure zero. Let $u_n \in 2^{10n}$ be the constantly equals 1 sequence. Put
\[
G_{n}:= \bigcup_{t \in 2^{n}} N_{t^\conc u_n},
\] 
$F'_k:= \bigcup_{n \geq k} G_n$, and then $E := \bigcap_{k \in \N} F'_k$. By construction is clear that $E$ is open dense and for any $x \in E$, $d(x)=1$.  Moreover the choice of the $u_n$'s, given any decreasing sequence $\{ \varepsilon_j: j\in\N  \}$ of real numbers $>0$, permits to find a subsequence of $F'_{k_j}$'s such that $\mu(F_{k_j}) < \varepsilon_j$.